\documentclass[11pt, a4paper,leqno]{article}

\usepackage{amsmath, amsthm, amsfonts, amssymb, latexsym, amscd, enumitem, stackrel,  url,hyperref}
\usepackage{euler}
\usepackage{times}


\usepackage{xy}
\xyoption{all}



\usepackage{enumitem}
\setenumerate{label={\rm{(\arabic*)}}}
\setitemize{label=$\circ$}


\makeatletter
\def\url@smallstyle{%
  \@ifundefined{selectfont}{\def\UrlFont{\sf}}{\def\UrlFont{\small\ttfamily}}}
\makeatother
\urlstyle{small}


\numberwithin{equation}{section}

\theoremstyle{plain}
    \newtheorem{theorem}[equation]{Theorem}
    \newtheorem{lemma}[equation]{Lemma}
    \newtheorem{corollary}[equation]{Corollary}
    \newtheorem{proposition}[equation]{Proposition}

    \newtheorem*{theorem*}{Theorem}
    \newtheorem*{proposition*}{Proposition}
    \newtheorem*{corollary*}{Corollary}
    \newtheorem*{lemma*}{Lemma}
    \newtheorem*{conjecture*}{Conjecture}
    \newtheorem{definition-theorem}[equation]{Definition/Theorem}
    \newtheorem{definition-lemma}[equation]{Definition/Lemma}
\theoremstyle{definition}
    \newtheorem{definition}[equation]{Definition}
    \newtheorem{example}[equation]{Example}
    \newtheorem{examples}[equation]{Examples}

    \newtheorem{remark}[equation]{Remark}


   
    \newcommand{\R}{\mathbb{R}}
    \newcommand{\C}{\mathbb{C}}
    \newcommand{\N}{\mathbb{N}}
    \newcommand{\Z}{\mathbb{Z}}
    \newcommand{\Q}{\mathbb{Q}}

   	\renewcommand{\phi}{\varphi}
    \let\epsilon\varepsilon

    \newcommand{\Bounded}{\mathcal{B}}
    \newcommand{\Compact}{\mathcal{K}}
    
 \newcommand{\into}{\hookrightarrow}

\newcommand{\dual}{\vee}

\DeclareMathOperator{\Hom}{Hom}

\DeclareMathOperator{\coker}{coker}

\DeclareMathOperator{\Index}{Index}

\DeclareMathOperator{\Ext}{Ext}

\DeclareMathOperator{\image}{image}

\newcommand{\nonsing}{\operatorname{ns}}
\newcommand{\sing}{\operatorname{s}}
\renewcommand{\mod}{\operatorname{mod}}



\newcommand{\coloneq}{:=}

\newcommand{\ket}[1]{\left|{#1}\right\rangle}

\newcommand{\Rep}{R}


\begin{document}
\title{Fredholm modules over graph $C^*$-algebras}
\author{Tyrone Crisp}
\date{\today}
\maketitle

\begin{abstract}
We present two applications of explicit formulas, due to Cuntz and Krieger, for computations in $K$-homology of graph $C^*$-algebras. We prove that every $K$-homology class for such an algebra is represented by a Fredholm module having finite-rank commutators; and we exhibit generating Fredholm modules for the $K$-homology of quantum lens spaces.
\end{abstract} 

\section*{Introduction}
A graph $C^*$-algebra is a $C^*$-algebra with a presentation determined by a directed graph. There is a close relationship between the combinatorial properties of a graph and the properties of the associated $C^*$-algebra, and this makes graph algebras especially amenable to explicit computations. In this note we study Fredholm modules and $K$-homology for graph $C^*$-algebras, with an emphasis on computations. 

Cuntz and Krieger showed in \cite{CK} and \cite{CK2} that the $\Ext$ groups of a Cuntz-Krieger algebra are isomorphic to the kernel and cokernel of a certain integer matrix. A formula for the isomorphism in odd degree was given in \cite{CK}; the corresponding formula in even degree can easily be recovered from the results of \cite{CK2}. In \cite{Tomforde_Ext}, Cuntz and Krieger's argument was adapted to the $C^*$-algebras of graphs satisfying Condition (L) (every loop has an exit), in which every vertex emits a finite, nonzero number of edges. The latter requirement was removed in \cite{DT}. The assumption of Condition (L) was avoided in \cite{Yi}, using an argument quite different to that of Cuntz and Krieger.

In Section \ref{K_section} we present the formulas deriving from \cite{CK}, \cite{CK2} and \cite{Tomforde_Ext}. The only novelty of our presentation is the translation from the language of extensions to that of Fredholm modules, and the removal of the assumption of Condition (L). We restrict our attention to graphs in which each vertex emits only finitely many vertices, for which the formulas are simplest; arbitrary graphs may be dealt with by similar methods.

We give two applications of these formulas. Firstly, we show that every class in the $K$-homology of a graph $C^*$-algebra $C^*(G)$ can be represented by a Fredholm module $(\rho,H,F)$ in which the commutator $[F,\rho(x)]$ is of trace class for each of the canonical generators $x\in C^*(G)$. This generalises a result of Goffeng and Mesland \cite{GM}, who proved the existence of such modules for all $K^1$ classes, and some $K^0$ classes, over Cuntz-Krieger algebras. Our methods seem to be quite different to those of \cite{GM}.

The second application is a computation of $K$-homology for `quantum' deformations of lens spaces. Hong and Szymanski have shown that the $C^*$-algebras corresponding to these spaces are isomorphic to the $C^*$-algebras of certain finite directed graphs (not satisfying Condition (L)). This turns the computation of the $K$-invariants of these algebras into a matter of finite-dimensional linear algebra: see \cite{HS} and \cite{HSlens}, where some $K$-theory groups are computed, and also \cite{ABL} where the same algebraic computation arises from geometric considerations. In Section \ref{lens_section} we exhibit generators for the $K$-homology of the quantum lens spaces as explicit Fredholm modules. The quantum lens spaces are defined as quotients of odd-dimensional quantum spheres $S_q$ by the action of a finite cyclic group $C$. Hawkins and Landi have constructed Fredholm modules generating the $K$-homology of $S_q$ in \cite{HL}. We prove that the pushforward from $S_q$ to $S_q/C$ is an isomorphism in even $K$-homology, while in odd degree the pushforward of Hawkins and Landi's module decomposes into eigenspaces for the group action, and these eigenspaces generate $K^1(C(S_q/C))$.

\subsubsection*{Acknowledgements}
Thanks to Wojciech Szymanski, who suggested this topic to me (in 2004), and to Francesca Arici, Simon Brain and Giovanni Landi, whose invitation to the workshop on ``$K$-homology and graph algebras'' in Trieste prompted the resurrection of this project. This work was partly supported by an Australian Postgraduate Award, and partly by the Danish National Research Foundation through the Centre for Symmetry and Deformation (DNRF92).

\subsubsection*{Notation and terminology}
A directed graph $G$ consists of (countable) sets $V$ and $E$ of vertices and edges, and maps $s,r:E\to V$ describing the source and range of each directed edge. We assume that $G$ is \emph{row-finite}, meaning that the set $\{e\in E\ |\ s(e)=v\}$ is finite for each $v\in V$. A \emph{sink} is a vertex for which the above set is empty; the set of all sinks is denoted $V_{\sing}$, and its complement in $V$ is denoted $V_{\nonsing}$. 
The $C^*$-algebra $C^*(G)$ associated to $G$ is the universal $C^*$-algebra generated by $V$ and $E$ subject to the relations
\[
 v=v^*=v^2\qquad vw=0 \qquad e^*e=r(e) \qquad e^*f=0 \qquad u=\sum_{s(e)=u} ee^*
\]
for all $v\neq w\in V$, $e\neq f\in E$ and $u\in V_{\nonsing}$.
See \cite{Raeburn} for more on graph $C^*$-algebras.

A \emph{Fredholm module} over a $C^*$-algebra $A$ is a triple $(\rho,H,F)$, where $\rho:A\to \Bounded(H)$ is a $*$-representation of $A$, and $F\in \Bounded(H)$ satisfies $F=F^*$, $F^2=1$, and $[F,\rho(a)]\sim 0$ for each $a\in A$; here $\sim$ denotes equality modulo compact operators. A \emph{graded} Fredholm module is one in which $\rho$ is a $\Z/2$-graded representation, and the operator $F$ reverses the grading. The odd (even) $K$-homology group $K^1(A)$ ($K^0(A)$) is a group of equivalence classes of (graded) Fredholm modules. The equivalence relation can be defined in several equivalent ways: see \cite{HR} and \cite{Blackadar} for details.

\section{Computing $K$-homology}\label{K_section}

Throughout this section, $G=(V,E,s,r)$ is a row-finite directed graph. Let $\Z V$ denote the free abelian group generated by $V$, and let $\Z V_{\nonsing}$ be the subgroup generated by $V_{\nonsing}$.
Consider the map
\[  \Z V_{\nonsing}\xrightarrow{\partial} \Z V\qquad\qquad \partial(v) = \left(\sum_{s(e)=v} r(e)\right) - v\]
which we view as a chain complex $A_*(G)$ concentrated in degrees $1$ and $0$.

\begin{theorem}\label{Ktheory_theorem}\textup{\cite{CK2}, \cite{PR}, \cite{RS}, \cite{DT}, \cite{Yi}.}
 The chain complex $A_*(G)$ computes the $K$-theory of $C^*(G)$:
 \[ K_0(C^*(G))\cong \coker \partial \quad \text{and}\quad K_1(C^*(G))\cong \ker \partial.\]
\end{theorem}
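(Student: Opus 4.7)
The plan is to apply the Pimsner--Voiculescu exact sequence to the gauge action. Let $\alpha\colon \T\to\Aut(C^*(G))$ be the gauge action, $\alpha_z(v)=v$, $\alpha_z(e)=z\,e$. The fixed-point subalgebra $F:=C^*(G)^{\T}$ is the closed linear span of balanced products $e_1\cdots e_n f_n^*\cdots f_1^*$; filtered by path length, it is AF, with Bratteli diagram controlled by the edge matrix of $G$ and with distinguished behavior at sink vertices, which cannot be decomposed via $v=\sum_{s(e)=v}ee^*$.

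First I would compute $K_0(F)$ as an inductive limit of copies of $\Z V$ along transition maps that extend paths by one edge: $[v]\mapsto \sum_{s(e)=v}[r(e)]$ for $v\in V_{\nonsing}$, and $[u]\mapsto [u]$ for $u\in V_{\sing}$. Since $F$ is AF, $K_1(F)=0$. Next, $F$ is Morita equivalent to $C^*(G)\rtimes_\alpha\T$, and Takai duality identifies the further crossed product by $\hat\T=\Z$ with $C^*(G)\otimes\Compact$. The Pimsner--Voiculescu six-term sequence associated with this $\Z$-action then collapses (using $K_1(F)=0$) to
\[
 0\to K_1(C^*(G))\to K_0(F) \xrightarrow{\,1-\beta_*\,} K_0(F)\to K_0(C^*(G))\to 0,
\]
where $\beta_*$ is the automorphism of $K_0(F)$ induced by the dual $\Z$-action, concretely the shift of the inductive system above.

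The crux is identifying $\ker(1-\beta_*)$ with $\ker\partial$ and $\coker(1-\beta_*)$ with $\coker\partial$. Taking the initial copy of $\Z V$ in the inductive limit as a concrete model for $K_0(F)$, the shift $\beta_*$ is the transition map itself, so $1-\beta_*$ restricts to $-\partial$ on $\Z V_{\nonsing}$ and vanishes on $\Z V_{\sing}$. A diagram-chase using the localization inherent in the inductive-limit description of $K_0(F)$ then shows that the kernel and cokernel of $1-\beta_*$ on the large group $K_0(F)$ coincide with those of $\partial\colon \Z V_{\nonsing}\to \Z V$. For row-finite graphs that are not finite, both sides of the claimed isomorphism are continuous under inductive limits over finite subgraphs, reducing the general case to the finite one.

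The main obstacle, I expect, is the diagram-chase in the previous paragraph: the passage from the large inductive-limit group $K_0(F)$ to the small concrete group $\Z V$ needs to be done carefully, especially to account correctly for the distinct behavior of the shift at sinks versus non-sinks.
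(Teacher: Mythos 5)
The paper does not prove Theorem \ref{Ktheory_theorem}; it quotes it from the literature, and the route you sketch (gauge action, AF fixed-point/crossed-product algebra, Takai duality, Pimsner--Voiculescu) is precisely the standard argument of the cited references \cite{PR}, \cite{RS}. For row-finite graphs \emph{without sinks} your outline is essentially correct.

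There is, however, a genuine gap exactly where the theorem's formulation $\partial\colon\Z V_{\nonsing}\to\Z V$ does its work, namely at the sinks. The claim that $F=C^*(G)^{\T}$ is Morita equivalent to $C^*(G)\rtimes_\alpha\T$ requires the gauge action to be saturated, and this fails when $G$ has sinks: for the graph with one vertex and no edges, $C^*(G)=\C$ with trivial gauge action, $F=\C$, but $\C\rtimes\T\cong c_0(\Z)$, whose $K_0$ is $\bigoplus_{\Z}\Z$, not $\Z$. Consequently the middle term of the PV sequence must be $K_0(C^*(G)\rtimes\T)$, which for graphs with sinks is a strictly larger inductive limit (graded additionally by $\hat\T=\Z$, with sink blocks persisting in every degree), and it is only on that larger group that the shift has the right kernel. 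If one instead runs your computation on $\varinjlim(\Z V,B)$ with $B(v)=\sum_{s(e)=v}r(e)$ for $v\in V_{\nonsing}$ and $B(u)=u$ for $u\in V_{\sing}$, then (using exactness of direct limits and the fact that $B$ acts as the identity on $\ker(1-B)$ and on $\coker(1-B)$) one gets $\ker(1-\beta_*)=\ker(1-B)=\ker\partial\oplus\Z V_{\sing}$ and $\coker(1-\beta_*)=\coker\partial$. The cokernel is right, but the kernel carries a spurious summand $\Z V_{\sing}$: for the graph $v\to w$ one would obtain $K_1(M_2(\C))\cong\Z$, which is false. So the ``diagram-chase'' you defer to cannot succeed as stated; the passage from the large group to the small one is not a bookkeeping detail but the point at which one must either work with the full crossed product (as in \cite{RS}) or first remove sinks by adding tails/desingularizing (as in \cite{DT}) and then check that $\ker\partial$ and $\coker\partial$ are unchanged.
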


The isomorphism $\beta:\coker\partial\to K_0(C^*(G))$ is given by the following simple formula: 
\begin{equation}\label{beta_equation} \beta(v+\image\partial)=[v]\end{equation}
where $[v]$ denotes the $K_0$-class of the projection $v\in C^*(G)$.

Taking duals (i.e., applying the functor $X^\dual = \Hom_{\Z}(X,\Z)$) gives a cochain complex $A^*(G)$, concentrated in degrees $0$ and $1$:
\[  \Z V^\dual \xrightarrow{\partial^\dual} \Z V_{\nonsing}^\dual\qquad\qquad \partial^\dual \eta (v) = \left(\sum_{s(e)=v} \eta(r(e))\right) - \eta(v).\]

\begin{theorem}\textup{\cite{CK}, \cite{CK2}, \cite{Tomforde_Ext}, \cite{DT}, \cite{Yi}.}
 The cochain complex $A^*(G)$ computes the $K$-homology of $C^*(G)$:
 \[ K^0(C^*(G)) \cong \ker \partial^\dual\quad \text{and}\quad K^1(C^*(G))\cong \coker \partial^\dual.\]
\end{theorem}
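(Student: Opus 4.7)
The plan is to deduce the result from the $K$-theory computation (Theorem \ref{Ktheory_theorem}) via the universal coefficient theorem for $KK$. Since $C^*(G)$ is separable, nuclear, and lies in the bootstrap class, the UCT provides natural short exact sequences
\[
  0 \to \Ext^1_\Z(K_{i-1}(C^*(G)),\Z) \to K^i(C^*(G)) \to \Hom_\Z(K_i(C^*(G)),\Z) \to 0
\]
for $i\in\{0,1\}$. I would then identify both end-terms with the cohomology of the dual complex $A^*(G)$ and conclude.

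To carry this out, I split the chain complex $A_*(G)$ into the two short exact sequences
\[
  0 \to K_1(C^*(G)) \to \Z V_{\nonsing} \to \image\partial \to 0
  \quad\text{and}\quad
  0 \to \image\partial \to \Z V \to K_0(C^*(G)) \to 0.
\]
As subgroups of the free abelian groups $\Z V_{\nonsing}$ and $\Z V$, both $K_1(C^*(G))$ and $\image\partial$ are themselves free, so the first sequence splits and the second is a free resolution of $K_0(C^*(G))$ of length one. Applying $\Hom_\Z(-,\Z)$ to both and chasing the resulting diagram yields
\[
  \ker\partial^\dual \cong \Hom_\Z(K_0(C^*(G)),\Z)
\]
and a natural short exact sequence
\[
  0 \to \Ext^1_\Z(K_0(C^*(G)),\Z) \to \coker\partial^\dual \to \Hom_\Z(K_1(C^*(G)),\Z) \to 0.
\]
Since $K_1(C^*(G))$ is free, $\Ext^1_\Z(K_1(C^*(G)),\Z) = 0$ and the UCT sequence for $K^0$ collapses to the required $K^0(C^*(G)) \cong \ker\partial^\dual$. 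For $K^1$ the UCT sequence has the same outer terms as the cokernel sequence above, so only an extension-class comparison is needed to finish.

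The main obstacle, and the real content of Section \ref{K_section}, is to realise these abstract identifications by explicit Fredholm modules, as in \cite{CK}, \cite{CK2} and \cite{Tomforde_Ext}. Concretely, I would assign to each cocycle $\eta\in\ker\partial^\dual$ a graded Fredholm module $(\rho_\eta,H_\eta,F_\eta)$ over $C^*(G)$, and to each cochain class in $\coker\partial^\dual$ an ungraded one, then verify that these assignments descend to homomorphisms from $\ker\partial^\dual$ and $\coker\partial^\dual$ into $K^*(C^*(G))$ which pair correctly against the $K_0$-generators $[v]$ from \eqref{beta_equation}. Matching the pairings forces the maps to be isomorphisms on the $\Hom_\Z(K_*,\Z)$-summands, and thereby pins down the extension class in the odd UCT sequence, producing the isomorphism in both degrees. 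The technical difficulty is writing down the Fredholm modules in a way that makes sense for graphs violating Condition (L): loops without exits obstruct the usual shift-type constructions, and one must arrange the representations so that the commutators $[F,\rho(x)]$ remain compact despite the absence of such shifts.
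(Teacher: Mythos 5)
Your even-degree argument coincides with the paper's (Section \ref{K0_index_section}): $K_1(C^*(G))=\ker\partial$ is a subgroup of a free abelian group, hence free, so the $\Ext$-term of the UCT vanishes and $K^0(C^*(G))\cong\Hom_\Z(K_0(C^*(G)),\Z)\cong(\coker\partial)^\dual\cong\ker\partial^\dual$. Your dualization of the two short exact sequences is also correct and does yield $0\to\Ext^1_\Z(K_0,\Z)\to\coker\partial^\dual\to\Hom_\Z(K_1,\Z)\to 0$. The gap is in how you finish the odd case. First, pairing an odd $K$-homology class against the projections $[v]\in K_0$ gives nothing; the relevant pairing is with $K_1$, and that pairing \emph{is} the UCT map $K^1\to\Hom_\Z(K_1,\Z)$, so it factors through the quotient and vanishes identically on the subgroup $\Ext^1_\Z(K_0,\Z)$. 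Hence ``matching the pairings'' can at best show that your proposed map $\coker\partial^\dual\to K^1$ is compatible with the two surjections onto $\Hom_\Z(K_1,\Z)$; it cannot control the induced map on the $\Ext$-subgroups, and in particular cannot ``pin down the extension class.'' If all you want is the abstract isomorphism of the theorem as stated, the honest way to close this route is to note that \emph{both} extensions split (the UCT sequence splits unnaturally, and $\coker\partial^\dual$ splits because $\Z V_{\nonsing}^\dual\cong(\image\partial)^\dual\oplus(\ker\partial)^\dual$ with the image of $\partial^\dual$ contained in the first summand); but that argument produces no particular isomorphism and so cannot support the explicit formula of Theorem \ref{K1_theorem}.

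The paper takes a genuinely different route in odd degree, following Cuntz and Krieger: it constructs the isomorphism $K^1(C^*(G))\to\coker\partial^\dual$ directly as an index map, defined edge-by-edge on Fredholm modules normalized as in \eqref{proj_degenerate_equation} (Lemma \ref{proj_degenerate_lemma}), and passes between the edge complex $B_*(G)$ and the vertex complex $A_*(G)$ via Lemma \ref{qis_lemma}. Well-definedness is Lemma \ref{K1_welldefined_lemma}, which combines Voiculescu's theorem with the ample-representation Lemma \ref{technical_lemma}; the latter is precisely where Condition (L) is circumvented, by choosing the unitaries around a loop without exit so that their product has essential spectrum the whole circle, which makes Szyma\'nski's uniqueness theorem applicable. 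Injectivity is Tomforde's argument, and surjectivity is the explicit construction of Section \ref{K1_module_section}. So your instinct that explicit Fredholm modules are the real work is right, but in the paper they serve to prove surjectivity of an already well-defined injective index map, not to compare UCT extension classes; as written, your odd-degree argument is incomplete.
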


One can extract explicit formulas for these isomorphisms from the arguments of Cuntz and Krieger (and their later generalisations to graph $C^*$-algebras). In even degree one has:

\begin{theorem}\label{K0_theorem}\textup{\cite{CK2}.}
 Let $\mathcal F=\left( \rho_0 \oplus \rho_1, H_0\oplus H_1, F \right)$ be a graded Fredholm module over $C^*(G)$. The isomorphism $K^0(C^*(G))\to \ker \partial^\dual$ sends the class of $\mathcal F$ to the function
 \[ \Index_{\mathcal F}(v)= \Index \left(\rho_1(v) F \rho_0(v)\right),\]
 the Fredholm index of $\rho_1(v) F \rho_0(v)$ as an operator from $\rho_0(v)H_1$ to $\rho_1(v)H_0$.
\end{theorem}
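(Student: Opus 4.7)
The plan is to verify the formula in two stages: first, to show that $\Index_{\mathcal F}$ is a well-defined element of $\ker \partial^\dual$ by a direct computation using the Cuntz-Krieger relations and additivity of the Fredholm index; and second, to identify the resulting homomorphism $K^0(C^*(G)) \to \ker \partial^\dual$ with the isomorphism of the preceding theorem, by invoking the Universal Coefficient Theorem.

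For the first stage, fix a non-sink vertex $v$ and use the CK relation $v = \sum_{s(e) = v} ee^*$. Set $q_e \coloneq \rho(ee^*)$, which respects the grading: $q_e = q_{e,0} \oplus q_{e,1}$ with $q_{e,i}$ a projection on $H_i$. The relation $e^* f = 0$ for distinct edges $e,f$ with common source yields $q_e q_f = \rho(ee^* ff^*) = 0$, so $\rho_i(v) = \sum_{s(e)=v} q_{e,i}$ is an orthogonal direct sum. Because $[F, \rho(ee^*)] \sim 0$ modulo compacts, we have $F q_{f,0} \sim q_{f,1} F$, and then orthogonality makes the off-diagonal terms of $\rho_1(v) F \rho_0(v) = \sum_{e,f} q_{e,1} F q_{f,0}$ compact. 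Thus, modulo compact operators,
\[ \rho_1(v) F \rho_0(v) \;\sim\; \bigoplus_{s(e)=v} q_{e,1} F q_{e,0}, \]
and additivity of the index yields $\Index_{\mathcal F}(v) = \sum_{s(e)=v} \Index(q_{e,1} F q_{e,0})$. The partial isometry $\rho(e)$ implements a graded unitary identification $\rho(r(e)) H \cong q_e H$, since $\rho(e)^* \rho(e) = \rho(r(e))$ and $\rho(e)\rho(e)^* = q_e$; conjugating by $\rho_0(e)$ and $\rho_1(e)$ and invoking $[F, \rho(e)] \sim 0$ transports $q_{e,1} F q_{e,0}$ to $\rho_1(r(e)) F \rho_0(r(e))$ up to compacts. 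Hence $\Index(q_{e,1} F q_{e,0}) = \Index_{\mathcal F}(r(e))$, and summing produces
\[ \Index_{\mathcal F}(v) = \sum_{s(e)=v} \Index_{\mathcal F}(r(e)), \]
which is exactly the equation $\partial^\dual \Index_{\mathcal F}(v) = 0$.

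For the second stage, observe that $\Index_{\mathcal F}(v)$ is the Kasparov index pairing of $[\mathcal F] \in K^0(C^*(G))$ with $\beta(v + \image \partial) = [v] \in K_0(C^*(G))$; hence the assignment $\mathcal F \mapsto \Index_{\mathcal F}$ is precisely the natural pairing map $K^0(C^*(G)) \to \Hom(K_0(C^*(G)), \Z)$ composed with the identifications $\Hom(K_0(C^*(G)), \Z) = \Hom(\coker \partial, \Z) \cong \ker \partial^\dual$ supplied by Theorem \ref{Ktheory_theorem} and formula (\ref{beta_equation}). Graph $C^*$-algebras are nuclear and lie in the UCT bootstrap class, so this pairing fits into an exact sequence
\[ 0 \to \Ext(K_1(C^*(G)), \Z) \to K^0(C^*(G)) \to \Hom(K_0(C^*(G)), \Z) \to 0; \]
since $K_1(C^*(G)) \cong \ker \partial$ is a subgroup of the free abelian group $\Z V_{\nonsing}$ and therefore itself free, the $\Ext$ term vanishes and the pairing is an isomorphism. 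The main obstacle is the bookkeeping in the first stage: one must check carefully that every commutator of $F$ with a projection or partial isometry really is compact, and that conjugation by $\rho(e)$ does return the Fredholm operator on $q_e H$ to one on $\rho(r(e)) H$ up to compacts, but once these identities are in hand the remainder reduces to standard index and UCT machinery.
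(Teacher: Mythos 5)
Your second stage is precisely the paper's proof: the paper defines the map $\Index$ by requiring the square $\gamma\circ\Index=\beta^\dual\circ\alpha$ to commute, where $\alpha$ is the UCT index pairing (an isomorphism because $\ker\partial$, being a subgroup of the free abelian group $\Z V_{\nonsing}$, is free, so the $\Ext$ term vanishes) and $\gamma:\ker\partial^\dual\to(\coker\partial)^\dual$ is the canonical identification; the stated formula is then immediate from \eqref{beta_equation} and \eqref{alpha_equation}. Your first stage --- the direct verification via the Cuntz--Krieger relations that $\partial^\dual\Index_{\mathcal F}=0$ --- is correct (and a worthwhile sanity check), but it is logically redundant, since membership in $\ker\partial^\dual$ is automatic once $\Index_{\mathcal F}$ is recognized as the pullback along $\beta$ of the functional $\alpha_{\mathcal F}$ on $K_0(C^*(G))\cong\coker\partial$.
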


Theorem \ref{K0_theorem} is proved in Section \ref{K0_index_section}; it is an easy consequence of \eqref{beta_equation} and the UCT (\cite[Theorem 3.11]{CK2}, \cite{RosScho}). The theorem is used in Section \ref{K0_module_section} to construct an explicit representative for each class in $K^0(C^*(G))$.

The formula for the isomorphism $K^1(C^*(G))\cong \coker \partial^\dual$ relies on the following lemma:

\begin{lemma}\label{proj_degenerate_lemma}
Let $\mathcal F$ be a Fredholm module over $C^*(G)$, where $G$ is a row-finite directed graph. There is a compact perturbation $(\rho,H,F)$ of $\mathcal F$ satisfying
\begin{equation}\label{proj_degenerate_equation}\tag{$\star$} [ F,\rho(ee^*) ] = [F, \rho(v) ] = 0 \quad \text{for all}\quad e\in E,\ v\in V.\end{equation}
\end{lemma}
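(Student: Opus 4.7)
My plan is to replace $F$ with its block-diagonal part relative to the decomposition of $H$ given by the projections appearing in $(\star)$. The graph relations imply that the family
\[\mathcal{P}=\{\rho(v):v\in V_{\sing}\}\cup\{\rho(ee^*):e\in E\}\]
consists of mutually orthogonal projections, since $\rho(v)\rho(w)=0$ for $v\neq w$, $\rho(ee^*)\rho(ff^*)=0$ for $e\neq f$, and $\rho(v)\rho(ee^*)$ equals $\rho(ee^*)$ or $0$ depending on whether $v=s(e)$. The Cuntz-Krieger relation $\rho(v)=\sum_{s(e)=v}\rho(ee^*)$ for $v\in V_{\nonsing}$ is a finite sum by row-finiteness, so commuting with $\mathcal{P}$ is equivalent to~$(\star)$.

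In the case of a finite graph I would set $F'=\sum_{p\in\mathcal{P}}pFp+(1-P)F(1-P)$ with $P=\sum_{p\in\mathcal{P}}p\in\rho(C^*(G))$. Then $F'$ commutes with each $p\in\mathcal{P}$ and with $1-P$, and the difference $F-F'$ is a \emph{finite} sum of off-diagonal blocks of the form $pFq=p[F,q]$ (using $pq=0$), each of which is compact since $p,q\in\rho(C^*(G))$. The resulting $F'$ is self-adjoint with $(F')^2-1$ compact; a standard functional-calculus step---replacing $F'$ by its phase, after patching the finitely many eigenvalues in $(-1,1)$ by a finite-rank correction---produces a genuine symmetry that still commutes with $\mathcal{P}$ and still differs from $F$ by a compact operator.

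The substantive difficulty is the infinite case. The naive limit $\sum_{p\in\mathcal{P}}pFp$ converges only in the strong operator topology, and an infinite sum of compact operators supported on mutually orthogonal subspaces need not be compact. My strategy would be to exhaust $V$ by increasing finite subsets $V_1\subseteq V_2\subseteq\cdots$ and use the projections $Q_n=\sum_{v\in V_n}\rho(v)\in\rho(C^*(G))$, which satisfy $[F,Q_n]$ compact and (after restricting to the nondegenerate part of $\rho$) $Q_n\to 1$ strongly. Applying the finite construction inside the range of $Q_n$ produces a compact perturbation $F_n$ of $F$ commuting with all $p\in\mathcal{P}$ with $p\leq Q_n$. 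The main obstacle is then to assemble the $F_n$ into a single compact perturbation $F'$ of $F$ commuting with all of $\mathcal{P}$: this requires using not only the compactness of the commutators with the projections themselves but also that with the partial isometries $\rho(e)$, which couple different blocks, and arranging the successive corrections to have rapidly decreasing norms (via finite-rank approximation) so that their telescoping sum converges in operator norm to a compact operator. A final functional-calculus adjustment then delivers the required symmetry satisfying $(\star)$.
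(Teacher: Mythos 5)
Your reduction to the family $\mathcal P$ of mutually orthogonal projections, and your treatment of the finite case, are fine; but the infinite case --- which you correctly identify as the substantive one --- is left as an unexecuted sketch, and the strategy you propose (exhaustion by $Q_n$, telescoping corrections with rapidly decreasing norms summing in norm to a compact operator) is aimed at the wrong target. You are implicitly trying to make $F-F'$ a compact \emph{operator}. The lemma does not require that: as the paper recalls immediately after the statement, $(\rho,H,F)$ is a compact perturbation of $(\rho,H,F')$ iff $(F-F')\rho(a)$ is compact for every $a$. This condition is local. For each generator $a\in V\sqcup E$, the operator $\rho(a)$ is supported in finitely many of the blocks $\rho(ee^*)H$ and $\rho(v)H$ (by row-finiteness and the relation $v=\sum_{s(e)=v}ee^*$), so after multiplying by $\rho(a)$ only finitely many of the compact off-diagonal corrections survive; and the set $\{a\in C^*(G) : (F-F')\rho(a)\sim 0\}$ is a norm-closed right ideal, so containing the generators (and, via $e^*=r(e)e^*$, their adjoints) forces it to be all of $C^*(G)$. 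With this observation your block-diagonal operator $\sum_{p}pFp+(1-P)F(1-P)$ (sum converging strongly) is already a compact perturbation of $F$, with no convergence gymnastics needed.

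A second, related problem is that your final ``functional-calculus adjustment'' cannot be performed globally in the infinite case: the defect $(F')^2-1$ of the block-diagonalized operator is an infinite orthogonal sum of compacts, hence in general not compact, so $F'$ need not have essential spectrum in $\{\pm1\}$ and there is no phase-plus-finite-rank patch. The paper's proof sidesteps both issues by doing the functional calculus \emph{block by block first}: inside each block $\rho(ee^*)H$ (and $\rho(v)H$ for $v$ a sink) the compression of $P=\tfrac12(F'+1)$ is a projection modulo compacts, so functional calculus there produces an honest projection $q_e$ (resp.\ $q_v$) differing from the compression by a compact operator; the strongly convergent sum $Q=\sum_e q_e+\sum_v q_v$ of mutually orthogonal projections is then an honest projection, $F=2Q-1$ is an honest symmetry commuting exactly with every $\rho(ee^*)$ and $\rho(v)$, and the compact-perturbation property is verified generator by generator exactly as above. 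You should restructure your argument along these lines; the exhaustion and telescoping machinery is unnecessary once the local nature of ``compact perturbation'' is used.
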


(Recall that $(\rho,H,F)$ is a \emph{compact perturbation} of $(\rho,H,F')$ if $(F-F')\rho(a)\sim 0$ for every $a\in A$.)

\begin{proof}
Let $\mathcal F=(\rho,H,F')$, and let $P=\frac{1}{2}(F'+1)$. For each $e\in E$, the operator $\rho(ee^*)P\rho(ee^*)$ descends to a projection in the Calkin algebra of $\rho(ee^*)H$, and a standard functional-calculus argument shows that there is a projection $q_{e}$ on $\rho(ee^*)H$ having $q_e\sim \rho(ee^*)P\rho(ee^*)$. Similarly, for each sink $v\in V_{\sing}$ there is a projection $q_v$ on $\rho(v)H$ having $q_v\sim \rho(v)P\rho(v)$. Let $Q= \sum_{e\in E} q_e + \sum_{v\in V_{\sing}} q_v$, and take $F=2Q-1$.
\end{proof}

\begin{theorem}\label{K1_theorem} \textup{\cite{CK}, \cite{Tomforde_Ext}.}
Let $G$ be a row-finite directed graph, and let $\mathcal F=(\rho,H,F)$ be a Fredholm module over $C^*(G)$ satisfying \eqref{proj_degenerate_equation}. The isomorphism $K^1(C^*(G))\to \coker \partial^\dual$ sends the class of $\mathcal F$ to the class of the function
  \[
   \Index_{\mathcal F} (v) = \sum_{s(e)=v} \Index (P\rho(e)P), 
  \]
where  $P=\frac{1}{2}(F+1)$, and $\Index(P\rho(e)P)$ is the Fredholm index of $P\rho(e)P$ as an operator from $\rho(e^*e)PH$ to $\rho(ee^*)PH$. 
 \end{theorem}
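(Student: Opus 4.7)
The plan proceeds in three stages: verify that the formula yields a well-defined integer for each $v \in V_{\nonsing}$; check that its class in $\coker\partial^\dual$ depends only on $[\mathcal F]$; and identify the resulting map with the Cuntz--Krieger--Tomforde isomorphism.

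For the first stage, condition $(\star)$ ensures that $P = (F+1)/2$ commutes exactly with $\rho(ee^*)$ and with $\rho(e^*e) = \rho(r(e))$, so $P\rho(e)P$ restricts to an operator $\rho(e^*e)PH \to \rho(ee^*)PH$. The graph relations together with the compactness of $[F, \rho(e)]$ make the products $(P\rho(e)P)(P\rho(e^*)P)$ and $(P\rho(e^*)P)(P\rho(e)P)$ compact perturbations of the identities on $\rho(ee^*)PH$ and $\rho(e^*e)PH$; hence each $P\rho(e)P$ is Fredholm, and row-finiteness makes $\Index_{\mathcal F}(v)$ a finite integer.

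For the second stage, suppose $(\rho, H, F)$ and $(\rho, H, F')$ both satisfy $(\star)$ and represent the same $K^1$-class, with associated projections $P$ and $P'$. The difference $P - P'$ is compact and commutes with every $\rho(v)$ and $\rho(ee^*)$, so for each such projection $q$ the relative Fredholm index $\eta(q) \in \Z$ of the pair $(qP, qP')$ on $qH$ is well-defined. A standard index identity then gives
\[ \Index(P'\rho(e)P') - \Index(P\rho(e)P) = \eta(\rho(ee^*)) - \eta(\rho(r(e))); \]
summing over $\{e : s(e) = v\}$ and using the relation $v = \sum_{s(e)=v} ee^*$ exhibits the difference of $\Index$-functions as the coboundary $-\partial^\dual \tilde\eta$ with $\tilde\eta(v) = \eta(\rho(v))$. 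Compatibility with direct sums, degenerate modules, and operator homotopies is routine.

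For the third stage, I would identify the resulting map via the extension picture. The $C^*$-subalgebra $\mathcal T \subset \Bounded(PH)$ generated by $\Compact(PH)$ together with $\{\rho(v)P\}_{v \in V}$ and $\{P\rho(e)P\}_{e \in E}$ fits into a sequence
\[ 0 \to \Compact(PH) \to \mathcal T \to C^*(G) \to 0 \]
whose class in $\Ext(C^*(G)) = K^1(C^*(G))$ is $[\mathcal F]$. The arguments of \cite{CK} and \cite{Tomforde_Ext} identify the image of this extension in $\coker\partial^\dual$ as the cochain recording, vertex by vertex, the Fredholm defects of the edge lifts $P\rho(e)P$ --- which is exactly the formula of the theorem. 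The main obstacle is this last step: translating between the Busby-invariant conventions used in \cite{CK, Tomforde_Ext} and the Fredholm-module presentation here requires careful bookkeeping of lifts and signs, though without introducing any new ideas beyond those already developed in those papers.
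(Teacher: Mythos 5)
Your stages 1 and 2 are essentially sound, and your stage 2 actually takes a slightly more elementary route than the paper: where you compare two compatible projections $P$ and $P'$ directly via the essential codimension of the pairs $(\rho(ee^*)P,\rho(ee^*)P')$ and $(\rho(v)P,\rho(v)P')$ (note that $P-P'$ is only compact after cutting down by $\rho(ee^*)$ or $\rho(v)$, not globally), the paper's Lemma \ref{K1_welldefined_lemma} extends the families $\{\rho(ee^*)P\}$ and $\{\rho(ee^*)P'\}$ to ample representations on $PH$ and $P'H$ and invokes Voiculescu's theorem to produce an intertwining unitary, exactly as in Cuntz--Krieger and Tomforde. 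Both yield that the difference of index cochains is a coboundary. (One small caution: invariance under operator homotopy is not quite ``routine,'' since \eqref{proj_degenerate_equation} need not persist along a homotopy; the paper handles this by defining $\Index_{\mathcal F}$ for arbitrary modules via Lemma \ref{proj_degenerate_lemma} and relying on the standard generation of the $K$-homology relation by compact perturbation, unitary equivalence, and degenerate addition.)

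The genuine gap is in stage 3. You propose to identify the map with the isomorphism of \cite{CK} and \cite{Tomforde_Ext} by passing to the Busby invariant of the extension $0\to\Compact(PH)\to\mathcal T\to C^*(G)\to 0$, ``without introducing any new ideas beyond those already developed in those papers.'' But those papers assume Condition (L) (every loop has an exit): their identification of $\Ext$ with $\coker\partial^\dual$ rests on a uniqueness theorem for representations that fails for graphs with loops without exit, which includes precisely the graphs $G_n^p$ needed for the quantum lens space application, and the theorem as stated makes no Condition (L) hypothesis. The paper closes this gap with Lemma \ref{technical_lemma}: given any family of orthogonal infinite-rank projections indexed by $E\sqcup V_{\sing}$, one can build an \emph{ample} representation extending it by arranging that each loop without exit acts by an operator whose essential spectrum contains the circle, and then invoking Szymanski's general uniqueness theorem. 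With that lemma in hand the paper proves injectivity directly (adapting Tomforde's Proposition 4.20) and proves surjectivity not by citation but by the explicit construction of Section \ref{K1_module_module_section_placeholder_do_not_cite}\let\relax\relax\ref{K1_module_section}, which realizes every $\eta\in\Z V_{\nonsing}^\dual$ as $\Index_{\mathcal F}$ for a concrete module with finite-rank commutators (this construction is also what yields Corollary \ref{summable_corollary}). Your plan as written would therefore only prove the theorem under Condition (L), and it omits the surjectivity argument entirely.
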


 Theorem \ref{K1_theorem} is proved in Sections \ref{K1_index_section} and \ref{K1_module_section}. 
 
 The constructions in Sections \ref{K0_module_section} and \ref{K1_module_section} yield the following corollary:
 
 \begin{corollary}\label{summable_corollary}
 Let $G$ be a row-finite directed graph. Every class in $K^*(C^*(G))$ can be represented by a Fredholm module $(\rho,H,F)$ for which the commutators $[F,\rho(x)]$ are finite-rank operators for all $x\in V\sqcup E$.
 \end{corollary}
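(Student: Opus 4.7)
My plan is to read off the finite-rank commutator property directly from the explicit Fredholm-module representatives that will be constructed in Sections \ref{K0_module_section} and \ref{K1_module_section}. In each case, I expect the construction to start from a \emph{degenerate} baseline Fredholm module on a path-space-type Hilbert space, in which $F$ (equivalently, the projection $P=\frac{1}{2}(F+1)$) exactly commutes with every generator $x\in V\sqcup E$, and then to perturb $F$ by finite-rank operators localised at individual vertices or edges so as to realise the prescribed index data. Since the perturbation is finite-rank and the baseline $F$ commutes with $\rho$, all commutators $[F,\rho(x)]$ remain finite-rank for the canonical generators.

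For the odd case, given a class $[\eta]\in\coker\partial^\dual$, choose integers $\{n_e\}_{e\in E}$ with $\sum_{s(e)=v}n_e=\eta(v)$ for every $v\in V_{\nonsing}$, and seek a Fredholm module satisfying \eqref{proj_degenerate_equation} with $\Index(P\rho(e)P)=n_e$ for each edge. Starting from a baseline where $P$ commutes with $\rho$ on a sufficiently large Hilbert space (so that each $\rho(ee^*)H$ is infinite-dimensional), I would add, for each $e$, a finite-rank self-adjoint perturbation of $P$ supported inside $\rho(ee^*)H$ and tuned to shift the Fredholm index of $P\rho(e)P:\rho(e^*e)PH\to\rho(ee^*)PH$ by $n_e$. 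The relations $[F,\rho(v)]=[F,\rho(ee^*)]=0$ persist exactly, while $[F,\rho(e)]$ becomes the commutator of a finite-rank operator with $\rho(e)$, hence finite-rank; Theorem \ref{K1_theorem} confirms that the resulting class is $[\eta]$.

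For the even case, given $\eta\in\ker\partial^\dual$, I would start from a degenerate \emph{graded} baseline on a doubled path-space Hilbert space where $F$ reverses the grading and commutes with all of $\rho$, then for each vertex $v$ add a finite-rank grading-reversing perturbation of $F$ supported on the infinite-dimensional subspace $\rho(v)H$, arranged so that $\rho_1(v)F\rho_0(v)$ has Fredholm index $\eta(v)$. Because the perturbations at distinct vertices live on orthogonal subspaces, they fit together into a bounded self-adjoint operator, and $[F,\rho(w)]$ remains zero for every $w\in V$, while $[F,\rho(e)]$ is a commutator of a finite-rank operator with $\rho(e)$, hence finite-rank. Theorem \ref{K0_theorem} identifies this module with $\eta$. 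The main obstacle I anticipate is exhibiting the baseline representation in enough generality --- a faithful (or at least sufficiently large) representation of $C^*(G)$ for which an involution $F$ commuting with every generator exists, with $\rho(v)H$ infinite-dimensional at each vertex; but for row-finite graphs this can be arranged using the standard path-space constructions, with doubling if needed to accommodate the grading or to sidestep simplicity of $C^*(G)$.
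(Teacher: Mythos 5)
There is a genuine gap, and it is fatal to the strategy rather than a fixable detail. Your plan in both parities is to start from a baseline module in which $F$ commutes exactly with the representation and then to change $F$ by finite-rank operators so as to produce the prescribed index data. But a finite-rank change of $F$ (with $\rho$ held fixed) is a compact perturbation in the sense of the paper, and compact perturbations of a degenerate module represent the zero class; equivalently, the index data cannot move. Concretely, in the even case the operator $\rho_1(v)F\rho_0(v):\rho_0(v)H_0\to\rho_1(v)H_1$ changes only by a finite-rank operator between the \emph{same} two subspaces, so its Fredholm index is unchanged and remains $0$ --- you cannot ``arrange'' it to be $\eta(v)$. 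In the odd case the situation is subtler because perturbing $P$ inside $\rho(ee^*)H$ changes both the codomain of $P\rho(e)P$ and the domains of $P\rho(f)P$ for all $f$ with $r(f)=s(e)$; if you track this you find that the function $e\mapsto\Index(P\rho(e)P)$ changes exactly by an element of $\image d^\dual$, so the class in $\coker d^\dual$ is still $0$. (This is precisely the content of Lemma \ref{K1_welldefined_lemma}.) Your construction therefore only ever produces representatives of the trivial class.

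The paper's constructions put the index into the \emph{representation} rather than into $F$. In the even case (Section \ref{K0_module_section}) one takes $F$ to be the exact flip on $H\oplus H$ and uses two different representations $\rho_0,\rho_1$ on $H=\ell^2(\Z)\otimes\ell^2(V)$ whose values on each generator differ by a finite-rank operator, with $\rho_0(v)$ and $\rho_1(v)$ half-line projections of relative index $\eta(v)$; the commutator $[F,\rho_0(x)\oplus\rho_1(x)]$ is then the off-diagonal matrix with entries $\pm(\rho_1(x)-\rho_0(x))$, which is finite rank, while the index is carried by the mismatch of the two vertex projections. In the odd case (Section \ref{K1_module_section}) one keeps $F$ as the standard Hardy-type involution and builds a shift by $\eta(v)$ into one distinguished isometry $\rho(e_0)$ at each nonsingular vertex, so that $[F,\rho(e_0)]$ is finite rank and $P\rho(e_0)P$ has index $\eta(v)$, while $F$ commutes exactly with all other generators (so \eqref{proj_degenerate_equation} holds). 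Your finite-rank-commutator observation is the right endgame, but it must be applied to modules of this kind; as proposed, the perturbation argument cannot reach any nonzero class.
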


 The same result has been proved for $K^1$ of Cuntz-Krieger algebras (and for $K^0$ in special cases) in \cite{GM}, by quite different methods.

\subsection{The index map for $K^0$}\label{K0_index_section}

The even $K$-homology group $K^0(C^*(G))$ is computed using the following diagram:
\[
 \xymatrix{
 K^0(C^*(G)) \ar[r]^-{\alpha} \ar[d]_-{\Index} & K_0(C^*(G))^\dual \ar[d]^-{\beta^\dual} \\
 \ker \partial^\dual \ar[r]^-{\gamma} & (\coker \partial)^\dual
 }
\]
Here $\alpha$ is the index pairing between $K$-homology and $K$-theory: if $p\in C^*(G)$ is a projection, and $\mathcal F=\left( \rho_0 \oplus \rho_1, H_0\oplus H_1, F \right)$ is a graded Fredholm module over $C^*(G)$, then 
\begin{equation}\label{alpha_equation} \alpha_{\mathcal F}(p) = \Index \left(\rho_0(p)H_0\xrightarrow{\rho_1(p) F \rho_0(p)} \rho_1(p)H_1\right).\end{equation}
Since $K_1(C^*(G))$ is torsion-free, the UCT implies that $\alpha$ is an isomorphism (see \cite{HR}, \cite{RosScho}). The map $\gamma$ is the isomorphism induced by the canonical pairing $\Z V^\dual \times \Z V \to \Z$. The map $\beta^\dual$ is the dual of the isomorphism $\beta:\coker \partial \to K_0(C^*(G))$ from \eqref{beta_equation}. The isomorphism $\Index$ is defined by insisting that the diagram commute. The formula appearing in Theorem \ref{K0_theorem} follows immediately from \eqref{beta_equation} and \eqref{alpha_equation}.

\subsection{Explicit Fredholm modules for $K^0$}\label{K0_module_section}

Let $G=(V,E,s,r)$ be a row-finite directed graph, and let $\eta$ be an element of $\ker \partial^\dual$: that is, a function $V\to \Z$ such that
\begin{equation}\label{harmonic_equation}\eta(v)=\sum_{s(e)=v} \eta(r(e))\quad\text{for all }v\in V_{\nonsing}.\end{equation} 
We shall construct a graded Fredholm module $\mathcal F$ having $\Index_{\mathcal F}=\eta$.

Let $H=\ell^2(\Z)\otimes \ell^2(V)$ be a Hilbert space with orthonormal basis $\{ \ket{n,v}\ |\ n\in \Z, v\in V\}$. For each vertex $v\in V$, let $\rho_0(v)\in \Bounded(H)$ be the projection of $H$ onto its subspace $\ell^2(\{n\geq 0\})\otimes \ell^2(\{v\})$, and let $\rho_1$ be the projection onto the subspace $\ell^2(\{n\geq \eta(v)\})\otimes \ell^2(\{v\})$. 
For each vertex $v$ that is not a sink, choose an ordering $e_0,\ldots,e_{d-1}$ of the edges with source $v$. For such an edge $e_i$, let $\rho_0(e_i)\in \Bounded(H)$ be the partial isometry with support $\rho_0(r(e_i))$, given by $\rho_0(e_i)\ket{n, r(e_i)} = \ket{i+nd,v}$. For each $i$, choose a function $b_i:\{n\geq \eta(r(e_i))\} \to \{n\geq \eta(v)\}$ satisfying $b_i(n)=i+nd$ for all $n\geq 0$, and such that the disjoint union
\[ \sqcup b_i : \bigsqcup_{i=1}^{d-1}\{n\geq \eta(r(e_i))\} \to \{n\geq \eta(v)\} \]
is a bijection: this is possible by virtue of \eqref{harmonic_equation}. Then let $\rho_1(e_i)$ be the partial isometry with support $\rho_1(r(e_i))$, given by $\rho_1(e_i)\ket{n,r(e_i)} = \ket{b_i(n),v}$. 

The maps $\rho_0,\rho_1:V\sqcup E\to \Bounded(H)$ extend to $*$-representations of $C^*(G)$. Each $\rho_1(v)$ is a finite-rank perturbation of $\rho_0(v)$, and each $\rho_1(e)$ is a finite-rank perturbation of $\rho_0(e)$, so 
\[ \mathcal F = \left( \rho_0\oplus \rho_1, H\oplus H, \begin{bmatrix} 0 & 1\\ 1& 0\end{bmatrix}\right)\]
is a graded Fredholm module over $C^*(G)$. For each vertex $v$, the Fredholm operator $\rho_1(v)\rho_0(v):\rho_0(v)H\to \rho_1(v)H$
has index $\eta(v)$, and so Theorem \ref{K0_theorem} implies that
$\Index_{\mathcal F} = \eta$. This proves the even case of Corollary \ref{summable_corollary}.

\subsection{From vertices to edges}

The complex $A_*(G)$, which is defined in terms of the vertices of $G$, can be replaced by a complex defined in terms of the edges. The latter complex will be used in the proof of Theorem \ref{K1_theorem}. The complex $A_*(G)$ is the more useful for practical computations, since most graphs have more edges than vertices.

Consider the map
\[
 \Z E \xrightarrow{d} \Z[E\sqcup V_{\sing}] \qquad d(e)=\begin{cases} \left(\sum_{s(f)=r(e)} f\right) -e &\text{if }r(e)\in V_{\nonsing}\\ r(e)-e&\text{if }r(e)\in V_{\sing}\end{cases}
\]
which we view as a chain complex $B_*(G)$ concentrated in degrees $1$ and $0$. Define maps $\sigma:A_*(G)\to B_*(G)$ and $\tau:B_*(G)\to A_*(G)$ as follows:
\[
 \sigma_*(v)=\begin{cases} \sum_{s(e)=v} e &\text{if }v\in V_{\nonsing}\\ v&\text{if }v\in V_{\sing}\end{cases} \qquad
 \tau_1(e)=\begin{cases} r(e)&\text{if }r(e)\in V_{\nonsing}\\ 0&\text{if }r(e)\in V_{\sing}\end{cases}\]
 \[\tau_0(e)=r(e)\quad \text{for }e\in E,\qquad \tau_0(v)=v\quad \text{for }v\in V_{\sing}.\]
 
 \begin{lemma}\label{qis_lemma}
  $\sigma$ and $\tau$ are mutually homotopy-inverse quasi-isomorphisms.
 \end{lemma}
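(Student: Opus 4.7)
My plan is to verify directly that $\sigma$ and $\tau$ are chain maps, and then exhibit explicit chain homotopies realising both composites $\tau\sigma$ and $\sigma\tau$ as homotopic to the identity. Once both composites are homotopic to the identity, $\sigma$ and $\tau$ are mutually homotopy-inverse, hence in particular quasi-isomorphisms, so both assertions of the lemma follow at once. Checking that $\sigma$ and $\tau$ commute with the differentials is a routine verification, requiring only a case split according to whether a given vertex (or the range of a given edge) lies in $V_{\nonsing}$ or in $V_{\sing}$; the two cases in the definition of $d$ precisely match the two cases in the definition of $\sigma_0$.

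\textbf{The two homotopies.} For $\id_{A_*}\simeq \tau\sigma$, I take the degree-$+1$ map $h\colon \Z V \to \Z V_{\nonsing}$ defined by $h(v) = v$ for $v\in V_{\nonsing}$ and $h(v) = 0$ for $v\in V_{\sing}$. For $\id_{B_*}\simeq \sigma\tau$, I take the degree-$+1$ map $k\colon \Z[E\sqcup V_{\sing}] \to \Z E$ defined by $k(e) = e$ for $e\in E$ and $k(v) = 0$ for $v\in V_{\sing}$. The identity $\partial h + h\partial = \tau\sigma - \id$ is then checked on generators: in degree $0$ at $v\in V_{\nonsing}$ one has $\partial h(v) = \partial v = \sum_{s(e)=v}r(e) - v = \tau_0\sigma_0(v) - v$, while in degree $1$ the vanishing of $\tau_1$ on edges with sink range is exactly what makes $h_0\partial(v) = \sum_{s(e)=v,\,r(e)\in V_{\nonsing}} r(e) - v = \tau_1\sigma_1(v) - v$; the sink case is trivial on both sides. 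Similarly, $dk + kd = \sigma\tau - \id$ is verified edgewise, the crucial observation being that $d(e) = \sigma_0\tau_0(e) - e$ holds on the nose by the very definitions of $d$ and of $\sigma_0\tau_0$ (with the two branches of $d$ matching the two branches in the definition of $\sigma_0$).

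\textbf{Main obstacle.} There is no conceptual difficulty here, only bookkeeping: one must keep careful track of the $V_{\nonsing}$/$V_{\sing}$ dichotomy at every step, and observe that the maps $h$ and $k$ are designed precisely to ``project out'' the contribution of sinks so that their effect on $\tau\sigma-\id$ and $\sigma\tau-\id$ matches the asymmetric behaviour of $\sigma_0$ and $\tau_1$ on $V_{\sing}$.
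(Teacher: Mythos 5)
Your proof is correct and follows exactly the paper's argument: the same two explicit homotopies (the identity-on-edges map $\Z[E\sqcup V_{\sing}]\to\Z E$ and the kill-the-sinks map $\Z V\to\Z V_{\nonsing}$, with the letters $h$ and $k$ merely swapped relative to the paper), verified on generators with the same $V_{\nonsing}$/$V_{\sing}$ case split. Nothing to add.
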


 \begin{proof}
 That $\sigma$ and $\tau$ are maps of chain complexes is easily verified. Let $h:\Z[E\sqcup V_{\sing}]\to \Z E$ be the map $e\mapsto e$, $v\mapsto 0$. Let $k:\Z V \to \Z V_{\nonsing}$ be the map $v\mapsto v$ if $v\in V_{\nonsing}$, $v\mapsto 0$ if $v\in V_{\sing}$. Simple computations show that $h$ is a homotopy from $\sigma\tau$ to the identity on $B_*(G)$, and $k$ is a homotopy from $\tau\sigma$ to the identity on $A_*(G)$. 
 \end{proof}

 \subsection{The index map for $K^1$}\label{K1_index_section}

 This section is essentially a translation of Cuntz and Krieger's argument \cite[Section 5]{CK} (as adapted to graph algebras by Tomforde in \cite{Tomforde_Ext}) from the language of extensions to that of Fredholm modules; the main novelty here is the avoidance of condition (L), which is achieved by the following Lemma. Recall that a graph satisfies Condition (L) if it contains no \emph{loop without exit}; a loop without exit is a sequence of edges $e_1e_2\cdots e_n$ such that $s(e_{i+1})=r(e_i)$, $s(e_1)=r(e_n)$, and for each $i$ the vertex $s(e_i)$ is not the source of any edge besides $e_i$. Let us say that a $*$-representation $\phi:C^*(G)\to \Bounded(H)$ is \emph{ample} if $\phi(a)\sim 0$ implies $a=0$. 
 
 \begin{lemma}\label{technical_lemma}
 Let $G$ be a row-finite directed graph, $H$ a separable Hilbert space, and $\{p_x\in \Bounded(H)\ |\ x\in E\sqcup V_{\sing}\}$ a family of mutually orthogonal infinite-rank projections. There is an ample representation $\phi:C^*(G)\to \Bounded(H)$ satisfying $\phi(ee^*)=p_e$ and $\phi(v)=p_v$ for all $e\in E$ and $v\in V_{\sing}$. 
 \end{lemma}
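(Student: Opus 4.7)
The plan is to realise $\phi$ as the unitary transfer to $H$ of a known ample representation of $C^*(G)$. Start with any faithful, non-degenerate $*$-representation $\pi\colon C^*(G)\to\Bounded(K)$ on a separable Hilbert space $K$---one exists because $C^*(G)$, generated by the countable set $V\sqcup E$, is separable. Form the infinite amplification $\pi^\infty = \pi\otimes 1$ acting on $K\otimes\ell^2(\N)$. This $\pi^\infty$ is ample: for any $a\neq 0$, pick $\xi\in K$ with $\pi(a)\xi\neq 0$; the bounded sequence $\{\xi\otimes e_n\}_n$ converges weakly to zero while $\pi^\infty(a)(\xi\otimes e_n) = \pi(a)\xi\otimes e_n$ has constant positive norm, contradicting compactness of $\pi^\infty(a)$.

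The family $\{\pi^\infty(ee^*),\, \pi^\infty(v) : e\in E,\ v\in V_{\sing}\}$ consists of mutually orthogonal infinite-rank projections (each of $ee^*$ and each sink projection $v$ is nonzero in $C^*(G)$, hence has nonzero image under the faithful $\pi$, and tensoring with $\ell^2(\N)$ multiplies every nonzero rank by $\aleph_0$); and by the CK relation $v = \sum_{s(e)=v}ee^*$ for $v\in V_{\nonsing}$ together with non-degeneracy of $\pi$, these projections sum strongly to the identity of $K\otimes\ell^2(\N)$.

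Now let $P = \sum_{x\in E\sqcup V_{\sing}} p_x\in\Bounded(H)$, strongly convergent over the countable index set. Both $K\otimes\ell^2(\N)$ and $PH$ decompose as Hilbert-space direct sums indexed by $E\sqcup V_{\sing}$, with each summand a separable Hilbert space of dimension $\aleph_0$. Choose a unitary $U\colon K\otimes\ell^2(\N)\to PH$ identifying the summand $\pi^\infty(ee^*)(K\otimes\ell^2(\N))$ with $p_e H$ for each $e\in E$, and $\pi^\infty(v)(K\otimes\ell^2(\N))$ with $p_v H$ for each $v\in V_{\sing}$. Define $\phi(a)\in\Bounded(H)$ to act as $U\pi^\infty(a)U^*$ on $PH$ and as zero on $(1-P)H$. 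Then $\phi$ is a $*$-representation with $\phi(ee^*) = p_e$ and $\phi(v) = p_v$ for sinks $v$ by construction; and ampleness descends from $\pi^\infty$ because any compact $\phi(a)$ restricts to a compact $U\pi^\infty(a)U^*$ on $PH$, forcing $\pi^\infty(a) = 0$ and hence $a=0$.

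The main hurdle is the dimension bookkeeping needed to produce $U$: one must verify that every summand of the two direct-sum decompositions has Hilbert-space dimension exactly $\aleph_0$. Separability of $H$ together with the infinite-rank hypothesis on each $p_x$ handles the target side, while faithfulness of $\pi$ combined with the amplification by $\ell^2(\N)$ handles the source side; once these dimensions match up, the rest of the construction is routine.
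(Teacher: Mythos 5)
Your proof is correct, but it takes a genuinely different route from the paper's. The paper builds $\phi$ from scratch via the universal property: it sets $p_v=\sum_{s(e)=v}p_e$ for $v\in V_{\nonsing}$, chooses unitaries $\phi(e)\colon p_{r(e)}H\to p_eH$, and --- crucially, since $G$ need not satisfy Condition (L) --- arranges that for every loop without exit the product of the corresponding unitaries has essential spectrum containing the whole circle; ampleness then comes from Szyma\'nski's general Cuntz--Krieger uniqueness theorem applied to the induced map into the Calkin algebra. You instead take an infinite amplification $\pi\otimes 1$ of a faithful nondegenerate representation (which is automatically ample, by your weak-null-sequence argument) and conjugate by a unitary that matches the two direct-sum decompositions of $K\otimes\ell^2(\N)$ and $PH$ indexed by $E\sqcup V_{\sing}$; the dimension count works because each $p_x$ has infinite rank in a separable $H$ and each $\pi(ee^*)\otimes 1$, $\pi(v)\otimes 1$ is a nonzero projection amplified by $\ell^2(\N)$. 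Your approach buys you a cleaner argument that entirely sidesteps the uniqueness theorem and the loop-without-exit bookkeeping, at the cost of being less explicit (the unitary $U$ is non-constructive, whereas the paper's $\phi(e)$ are given directly in terms of the data $\{p_x\}$). One small point worth making explicit if you write this up: the fact that all $ee^*$ and all vertex projections are nonzero in the universal algebra $C^*(G)$, which you use for the source-side dimension count, is standard but deserves a citation.
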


 \begin{proof}
 For each $v\in V_{\nonsing}$ let $p_v=\sum_{s(e)=v} p_e$. Then for each edge $e$, choose a unitary isomorphism $\phi(e):p_{r(e)}H \to p_e H$. This may be done in such a way that for each loop $l=e_1 e_2 \cdots e_n$ in $G$ without an exit, the essential spectrum of the operator $\phi(e_1)\phi(e_2)\cdots\phi(e_n)$ contains the unit circle: indeed, one can choose $\phi(e_1),\ldots,\phi(e_{n-1})$ arbitrarily, and then let $\phi(e_n) = \left(\phi(e_1)\cdots \phi(e_{n-1})\right)^{-1} u$, where $u: p_{e_1} H\to p_{e_1}H$ is a unitary with essential spectrum $\mathbb{T}$. The universal property of $C^*(G)$ implies that the map $e\mapsto \phi(e)$, $v\mapsto p_v$ extends to a $*$-representation $\phi:C^*(G)\to \Bounded(H)$, and Szymanski's generalisation of Cuntz and Krieger's uniqueness theorem \cite[Theorem 1.2]{SzCK} ensures that the composition of $\phi$ with the quotient map to the Calkin algebra is injective, so that $\phi$ is ample.
 \end{proof}

 For each Fredholm module $\mathcal F=(\rho, H, F)$ over $C^*(G)$ satisfying \eqref{proj_degenerate_equation}, we consider the function $\Index_{\mathcal F}:E\to \Z$ defined by
 \[ \Index_{\mathcal F}(e) = \Index\left( \rho(e^*e) PH \xrightarrow{P\rho(e)P} \rho(ee^*)PH\right)\]
 where $P$ is the projection $\frac{1}{2}(F+1)$.

 \begin{lemma}\label{K1_welldefined_lemma}
 Let $\mathcal F$ be a Fredholm module over $C^*(G)$ satisfying \eqref{proj_degenerate_equation}, and suppose that $\mathcal E$ is a compact perturbation of $\mathcal F$ also satisfying \eqref{proj_degenerate_equation}. Then $\Index_{\mathcal F}=\Index_{\mathcal E}$ in $\coker d^\dual$. 
 \end{lemma}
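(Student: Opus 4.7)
The plan is to exhibit an explicit $\xi\in \Z[E\sqcup V_{\sing}]^\dual$ with $d^\dual\xi = \Index_{\mathcal F}-\Index_{\mathcal E}$. Write $\mathcal F=(\rho,H,F)$, $\mathcal E = (\rho,H,F')$ and $P=\frac{1}{2}(F+1)$, $P'=\frac{1}{2}(F'+1)$. Thanks to $(\star)$, both $P$ and $P'$ commute with every $\rho(v)$ and $\rho(ee^*)$, so the cutdowns $P_v:=P\rho(v)$, $P_e:=P\rho(ee^*)$ and their primed versions are honest projections; and because $\mathcal E$ is a compact perturbation of $\mathcal F$, the differences $P_v-P'_v$ and $P_e-P'_e$ are compact. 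In particular the Fredholm indices $\Index(P_eP'_e)$ (of $P_eP'_e\colon P'_eH\to P_e H$) and $\Index(P_vP'_v)$ are well-defined integers.

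The candidate cocycle is $\xi(e):=-\Index(P_eP'_e)$ for $e\in E$ and $\xi(v):=-\Index(P_vP'_v)$ for $v\in V_{\sing}$. Two ingredients control $d^\dual \xi$. First, the Cuntz--Krieger relation $\rho(v)=\sum_{s(e)=v}\rho(ee^*)$ for $v\in V_{\nonsing}$ gives orthogonal decompositions $P_v=\sum_{s(e)=v}P_e$ and $P'_v=\sum_{s(e)=v}P'_e$; combined with $\rho(ee^*)\rho(ff^*)=0$ for $e\neq f$, this kills the cross terms in the product $P_v P'_v$ and yields $\Index(P_vP'_v)=\sum_{s(e)=v}\Index(P_eP'_e)$ by additivity of the Fredholm index. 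A short calculation then shows that, uniformly in whether $r(e)$ is a sink or not, $d^\dual\xi(e)=\Index(P_eP'_e)-\Index(P_{r(e)}P'_{r(e)})$.

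It remains to identify this difference with $\Index_{\mathcal F}(e)-\Index_{\mathcal E}(e)$. Introducing the transported projections $Q_e:=\rho(e)P_{r(e)}\rho(e^*)$ and $Q'_e:=\rho(e)P'_{r(e)}\rho(e^*)$ on $\rho(ee^*)H$, a brief manipulation shows that $P\rho(e)P$ factors through the unitary $\rho(e)\colon P_{r(e)}H\to Q_eH$, so $\Index_{\mathcal F}(e)=\Index(P_eQ_e)$ and similarly $\Index_{\mathcal E}(e)=\Index(P'_eQ'_e)$. Applying the chain rule for indices of pairs of projections twice gives $\Index(P_eQ_e)-\Index(P'_eQ'_e)=\Index(P_eP'_e)-\Index(Q_eQ'_e)$, and the main technical input is the identification $\Index(Q_eQ'_e)=\Index(P_{r(e)}P'_{r(e)})$, which comes from conjugation invariance of the Fredholm index under the partial isometry $\rho(e)$ (realised by the unitary $\rho(e^*)\colon Q_eH\to P_{r(e)}H$ and its primed analogue). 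Once this is in hand, the two expressions for $d^\dual\xi(e)$ and $\Index_{\mathcal F}(e)-\Index_{\mathcal E}(e)$ agree and the lemma follows. The chief subtlety is bookkeeping: keeping track of which operators are Fredholm, which differences are genuinely compact under $(\star)$ and the compact-perturbation hypothesis, and ensuring the signs from the two chain-rule applications land correctly.
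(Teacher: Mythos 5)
Your proof is correct, but it takes a genuinely different route from the paper's. The paper follows Cuntz--Krieger/Tomforde: after padding with a degenerate module so that all the cut-down projections have infinite rank, it invokes Lemma \ref{technical_lemma} to build ample representations $\phi$ on $PH$ and $\psi$ on $P'H$ with prescribed range projections, and then applies Voiculescu's theorem to produce a unitary $U\colon PH\to P'H$ intertwining them modulo compacts; the coboundary $\eta$ is then defined via indices of the compressions $\rho(ee^*)PUP\rho(ee^*)$ and $\rho(v)PUP\rho(v)$. You avoid all of that machinery and instead run a direct bookkeeping argument with the relative index $\Index(PQ)$ of pairs of projections with compact difference: its cocycle property (your ``chain rule''), its additivity over the orthogonal decomposition $P_v=\sum_{s(e)=v}P_e$ coming from the Cuntz--Krieger relation, and its invariance under conjugation by the partial isometries $\rho(e)$. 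I checked the key identities: $P_v-P_v'$ and $P_e-P_e'$ are indeed compact by the definition of compact perturbation, $P_e-Q_e=[P,\rho(e)]\rho(e^*)$ is compact because $\mathcal F$ is a Fredholm module, $Q_eQ_e'=\rho(e)P_{r(e)}P_{r(e)}'\rho(e^*)$ gives the conjugation step, and the signs match $d^\dual\xi(e)$ in both the sink and non-sink cases. Your argument is more elementary and self-contained (no Voiculescu, no degenerate padding, no care about loops without exit); the paper's version has the advantage that the same apparatus (ample representations plus Voiculescu) is reused immediately afterwards to prove injectivity of the index map, so within the paper's overall structure it costs nothing extra.
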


 \begin{proof}
Let $P$ and $Q$ be the projections associated to $\mathcal F$ and $\mathcal E$, respectively. By adding to $\mathcal F$ and $\mathcal E$ a degenerate Fredholm module of the form $(\alpha, H_\alpha, 1)$, where $\alpha$ is an infinite direct sum of faithful representations of $C^*(G)$, we may assume that the projections $\rho(ee^*)P$, $\rho(ee^*)Q$, $\rho(v)P$ and $\rho(v)Q$ all have infinite rank. Lemma \ref{technical_lemma} gives ample $*$-representations $\phi:C^*(G)\to \Bounded(PH)$ and $\psi:C^*(G)\to \Bounded(QH)$ satisfying
$\phi(ee^*)=\rho(ee^*)P$, $\phi(v)=\rho(v)P$, $\psi(ee^*)=\rho(ee^*)Q$ and $\psi(v)=\rho(v)Q$ for all $e\in E$ and $v\in V$.
Since $\phi(e^*)$ is an isomorphism from $\rho(ee^*)PH$ to $\rho(e^*e)PH$, we have
\[ \Index_{\mathcal F}(e) = \Index \left( \rho(ee^*)PH \xrightarrow{ P\rho(e)\phi(e^*) P} \rho(ee^*)PH \right).\]
The argument of \cite[Proposition 5.2]{CK} and \cite[Proposition 4.12]{Tomforde_Ext} now applies verbatim: Voiculescu's theorem (see e.g. \cite[Theorem 3.4.6]{HR}) gives a unitary $U:PH \to QH$ satisfying $U\phi(a)U^*\sim \psi(a)$ for all $a\in C^*(G)$, and a short computation shows that $\Index_{\mathcal F} - \Index_{\mathcal E} = d^\dual \eta$,
where $\eta:E\sqcup V_{\sing}\to \Z$ is the function
\begin{align*} \eta(e) &= \Index \left( \rho(ee^*)PH \xrightarrow{ \rho(ee^*) P U P\rho(ee^*)} \rho(ee^*) PH\right)\\
 \eta(v) &=\Index \left(\rho(v)PH \xrightarrow{\rho(v)P U P\rho(v)} \rho(v)PH\right)
 \end{align*}
for $e\in E$ and $v\in V_{\sing}$. Thus $\Index_{\mathcal F}=\Index_{\mathcal E}$ in $\coker d^\dual$.
\end{proof}

\begin{definition} 
 Given an arbitrary Fredholm module $\mathcal F$ over $C^*(G)$, let $\Index_{\mathcal F}\in \coker d^\dual$ be the class of $\Index_{\mathcal E}$, for any compact perturbation $\mathcal E$ of $\mathcal F$ satisfying \eqref{proj_degenerate_equation}.
\end{definition}

 It is clear from the definition that $\Index_{\mathcal F}$ is additive with respect to direct sums of Fredholm modules, and is invariant under unitary equivalences, compact perturbations, and addition of degenerate modules. Therefore the map $\mathcal F\mapsto \Index_{\mathcal F}$ induces a group homomorphism $K^1(C^*(G))\to \coker d^\dual$. The argument of \cite[Proposition 4.20]{Tomforde_Ext} adapts without difficulty to the present setting, showing that the index map is one-to-one. Composition with the quasi-isomorphism $\sigma^\dual:\Z E^\dual \to \Z V_{\nonsing}^\dual$ (Lemma \ref{qis_lemma}) gives an injective group homomorphism
\[ \Index: K^1(C^*(G)) \to \coker\partial^\dual \qquad \Index_{\mathcal F}(v) = \sum_{s(e)=v} \Index_{\mathcal F}(e)\]
which we will show to be surjective in the next section.

\subsection{Explicit Fredholm modules for $K^1$}\label{K1_module_section}

Let $\eta\in \Z V_{\nonsing}^\dual$ be an integer-valued function on the set of nonsingular vertices of $G$. Let $H=\ell^2(\Z)\otimes \ell^2(V)$ be a Hilbert space with orthonormal basis $\{ \ket{n,v}\ |\ n\in \Z,\ v\in V\}$. For each vertex $v\in V$, let $\rho(v)$ denote the orthogonal projection of $H$ onto its subspace $\ell^2(\Z)\otimes \ell^2(\{v\})$.

For each $v\in V_{\nonsing}$, choose an ordering $e_0,\ldots,e_{d-1}$ of the edges with source $v$, and then let $\rho(e_i):\rho(r(e_i))H\to \rho(v)H$ be the isometry
\[ \rho(e_i)\ket{n,r(e_i)} = \begin{cases} \ket{d(n-\eta(v)), v}&\text{if }i=0\\ \ket{i+dn,v}&\text{otherwise.}\end{cases}\]
The map $v\mapsto \rho(v)$, $e\mapsto \rho(e)$ extends to a $*$-representation $\rho:C^*(G)\to \Bounded(H)$.

Let $F \in \Bounded(H)$ be the involution  
\[ F\ket{n,v} =\begin{cases} \phantom{-}\ket{n,v} &\text{if }n\geq 0\\ -\ket{n,v}& \text{if }n< 0.\end{cases}\]
This operator commutes with each $\rho(e_0)$ modulo finite-rank operators, and commutes exactly with each $\rho(v)$, each $\rho(e_i)$ for $i\geq 1$, and each $\rho(ee^*)$. Thus $\mathcal F=(\rho,H,F)$ is a Fredholm module over $C^*(G)$ satisfying \eqref{proj_degenerate_equation}. Let $P=\frac{1}{2}(F+1)$. For each nonsingular vertex $v$, emitting edges $e_0,\ldots,e_{d-1}$, the operator $P\rho(e_0)P$ has index $\eta(v)$, while the operators $P\rho(e_i)P$ for $i\geq 1$ all have index $0$. We conclude that $\Index_{\mathcal F} = \eta$ in $\Z V_{\nonsing}^\dual$. This completes the proof of Theorem \ref{K1_theorem}, and also establishes the odd case of Corollary \ref{summable_corollary}.

\section{Application to quantum lens spaces}\label{lens_section}

\subsection{Background}

In the literature one may find several definitions of quantum lens spaces, and of the quantum spheres of which they are quotients. The following are the definitions that we shall use; they are taken from \cite{VS} and \cite{HSlens}.

\begin{definition}\label{q_definition}
Let $n\geq 2$ be an integer, and $q\in [0,1)$. The $C^*$-algebra $C(S^{2n-1}_q)$ of continuous functions on the quantum $(2n-1)$-sphere is the universal $C^*$-algebra generated by elements $z_1,\ldots,z_n$ with relations
\begin{gather*}
z_j z_i = qz_i z_j\quad (i<j) \quad\qquad z_j^*z_i = qz_i z_j^*\quad (i\neq j) \quad\qquad \sum_{i\geq 1} z_i z_i^* =1\\ z_i^*z_i=z_iz_i^*+(1-q^2)\sum_{j>i} z_jz_j^*.
\end{gather*}
Let $p\geq 1$ be an integer and $\zeta_p\in \C$ a primitive $p$th root of unity. The $C^*$-algebra $C(L^{2n-1}_{q,p})$ of continuous functions on the quantum lens space $L^{2n-1}_{q,p}$ is the fixed-point algebra $C(S_q^{2n-1})^{\alpha_p}$ for the automorphism $\alpha_p(z_i)=\zeta_p z_i$. 
\end{definition}

\begin{examples}
Special cases of the above definitions include quantum analogues of (the topological spaces underlying) the compact Lie groups $\operatorname{SU}(2)$ and $\operatorname{SO}(3)$, and the odd-dimensional real projective spaces $\R\mathrm{P}^{2n-1}$: 
\[ C(S^3_q)\cong C(\operatorname{SU}_q(2))\qquad C(L^3_{q,2})\cong C(\operatorname{SO}_q(3))\qquad C(L^{2n-1}_{q,2})\cong C(\R\mathrm{P}^{2n-1}).\]
See \cite{Wor}, \cite{Lance}, \cite{Podles}, \cite{HS}.
\end{examples}

Setting $q=1$ in Definition \ref{q_definition} gives the (commutative) $C^*$-algebras of continuous functions on the classical odd-dimensional spheres and lens spaces. 
Graph $C^*$-algebras are far from being commutative, which makes the following result of Hong and Szymanski rather surprising.

\begin{definition}
 For $n\geq 2$ let $G_n=(V_n,E_n,s,r)$ be the directed graph
 \[V_{n}=\{v_1,\ldots,v_n\}\qquad  E_{n} = \{e_{i,j}\ |\ 1\leq i \leq j\}\qquad s(e_{ij})=v_i\qquad r(e_{ij})=v_j.\]
 Then for each $p\geq 2$ let $G_{n}^p=(V_n^p, E_n^p,s,r)$ be the graph of length-$p$ directed paths in $G_n$: thus $G_{n}^p$ has vertices $V_n^p=V_n$, and one edge with source $v_i$ and range $v_j$ for each degree-$p$ noncommutative monomial $e_1\cdots e_p$ on $E_n$ having $s(e_1)=v_i$, $r(e_p)=v_j$, and $s(e_{i+1})=r(e_i)$. 
\end{definition}

\begin{theorem}\textup{\cite{HS}, \cite{HSlens}.}
There are isomorphisms of $C^*$-algebras
\[ C(S^{2n-1}_q)\cong C^*(G_n) \qquad \text{and}\qquad C(L^{2n-1}_{q,p})\cong C^*(G_n^p).\] 
\end{theorem}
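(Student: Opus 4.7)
The plan is to prove both isomorphisms by constructing explicit $*$-homomorphisms from the graph $C^*$-algebras into the quantum sphere and lens algebras, via the universal property of $C^*(G_n)$, and to upgrade these maps to isomorphisms using the gauge-invariant uniqueness theorem for graph $C^*$-algebras.

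For the sphere, the first task is to produce inside $C(S^{2n-1}_q)$ a family $\tilde v_1,\dots,\tilde v_n$ of mutually orthogonal projections summing to $1$, together with partial isometries $\tilde e_{ij}$ for $1\leq i\leq j\leq n$, satisfying the Cuntz-Krieger relations of $G_n$. The vertex projections can be extracted as spectral projections of the mutually commuting positive elements $z_kz_k^*$: the defining identity $z_i^*z_i=z_iz_i^*+(1-q^2)\sum_{j>i}z_jz_j^*$ makes the $C^*$-subalgebra generated by $\{z_kz_k^*\mid 1\leq k\leq n\}$ commutative, and one identifies the $\tilde v_i$ by analysing its Gelfand spectrum. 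The edge partial isometries $\tilde e_{ij}$ are then built from the $z_k$ as scaled products, engineered so that $\tilde e_{ij}^*\tilde e_{ij}=\tilde v_j$, $\tilde e_{ij}\tilde e_{ij}^*\leq \tilde v_i$, and $\tilde v_i=\sum_{j\geq i}\tilde e_{ij}\tilde e_{ij}^*$. Given these ingredients, the universal property of $C^*(G_n)$ produces a $*$-homomorphism $\pi:C^*(G_n)\to C(S^{2n-1}_q)$. Surjectivity is immediate once each $z_i$ is recovered as a sum of image edges. Injectivity follows from the gauge-invariant uniqueness theorem: the canonical $\T$-action $z_i\mapsto \lambda z_i$ on $C(S^{2n-1}_q)$ intertwines with the gauge action on $C^*(G_n)$ under $\pi$, and the vertex projections are nonzero, as one checks in, e.g., a Fock-type representation.

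For the lens space, the key ingredient is the general principle that for any row-finite graph $G$, the fixed-point subalgebra of $C^*(G)$ under the order-$p$ subgroup $\{\lambda\in\T\mid \lambda^p=1\}$ of the gauge action is canonically isomorphic to $C^*(G^p)$: monomials in the canonical generators of length divisible by $p$ are precisely the invariant ones, and length-$p$ monomials form the edge set of $G^p$ by definition, with length-$kp$ monomials factoring as composable paths in $G^p$. Transporting this across the sphere isomorphism, which intertwines the subaction of the gauge action with $\alpha_p$, immediately yields $C(L^{2n-1}_{q,p})\cong C^*(G_n^p)$.

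The main obstacle is the explicit construction of the $\tilde e_{ij}$ with the correct source, range and orthogonality relations. A naive guess such as $\tilde e_{ij}=z_i\tilde v_j$ will not satisfy $\tilde e_{ij}^*\tilde e_{ij}=\tilde v_j$ on the nose: the commutation rules $z_jz_i=qz_iz_j$ introduce powers of $q$ that must be absorbed by suitable scalar normalisations, and matching the subscripts of the $z_k$ with the spectral decomposition given by the $\tilde v_k$ is delicate. A secondary technical point, relevant for the lens case, is that $G_n^p$ typically fails Condition (L), so one must invoke a version of the gauge-invariant uniqueness theorem that does not presuppose it.
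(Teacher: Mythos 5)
The paper does not prove this theorem: it is imported as background from \cite{HS} and \cite{HSlens}, so there is no in-paper argument to compare against. Your overall architecture --- exhibit a Cuntz--Krieger $G_n$-family $\{\tilde v_i,\tilde e_{ij}\}$ inside $C(S^{2n-1}_q)$, use universality to obtain $\pi:C^*(G_n)\to C(S^{2n-1}_q)$, prove surjectivity by recovering the $z_i$ and injectivity by gauge-invariant uniqueness, then identify $C(L^{2n-1}_{q,p})$ with the fixed-point algebra of the order-$p$ subgroup of the gauge action and that in turn with $C^*(G_n^p)$ --- is exactly the strategy of the cited references, and the two difficulties you flag (the $q$-power normalisations, and the failure of Condition (L), which in fact already occurs for $G_n$ itself since $e_{nn}$ is a loop without exit) are the right ones.

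The genuine gap is that the heart of the proof, the explicit construction of the $\tilde v_i$ and $\tilde e_{ij}$, is labelled an ``obstacle'' rather than carried out, and it is not a routine verification: it is where essentially all of the work in \cite{HS} lies, and it is the only place where the hypothesis $q<1$ can enter --- and it must enter somewhere, since the statement is false at $q=1$ (the commutative algebra $C(S^{2n-1})$ is not a graph $C^*$-algebra). Concretely, the candidate vertex projections are support projections of positive elements such as $\sum_{j\geq i}z_jz_j^*$ cut down to the corner where $z_1=\dots=z_{i-1}=0$; for these to exist \emph{inside} the $C^*$-algebra one must show the relevant elements have a spectral gap at $0$, which follows from the $q$-deformed relations (spectra contained in $\{0\}\cup q^{2\N}\cup\{1\}$) and fails in the commutative limit, so ``analysing the Gelfand spectrum'' cannot be waved through. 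Until those elements are written down and the five relations checked, none of the following is established: the existence of $\pi$; surjectivity (since $z_i$ equals $\sum_j\tilde e_{ij}$ only up to an invertible normalising factor that must itself be shown to lie in the image); or the degree-one homogeneity of each $\tilde e_{ij}$ for the action $z_i\mapsto\lambda z_i$, which is precisely what your lens-space step needs in order to match $\alpha_p$ with the gauge action. The second half of your argument --- that for a row-finite graph with no sinks the fixed points of the order-$p$ gauge subgroup form $C^*(G^p)$, by rewriting each invariant $\mu\nu^*$ with $|\mu|\equiv|\nu|\pmod p$ in terms of length-$p$ paths and applying gauge-invariant uniqueness --- is correct and essentially complete, but it is conditional on the unproved first half.
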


As a final piece of background to our computation, let us recall the Fredholm modules over quantum spheres constructed by Hawkins and Landi in \cite{HL}. We shall present the formulas in terms of the Cuntz-Krieger generators, as found in \cite{HS}. 

Fix $n\geq 2$, and let $H$ be the Hilbert space $\ell^2(\N^{n-1}\times \Z)$, with orthonormal basis $\{\ket{k}\ |\ k= (k_1,\ldots,k_n) \in \N^{n-1}\times \Z\}$. For each $i=1,\ldots,n$ define operators $\delta_i,\epsilon_i\in \Bounded(H)$ by
\[
 \delta_i \ket{k} =\begin{cases} \ket{k}&\text{if }k_i=0\\ 0&\text{if }k_i\neq 0\end{cases}\qquad \epsilon_i\ket{k} = \ket{k_1,\ldots,k_{i-1},k_i+1,k_{i+1},\ldots,k_n}\] 
and then define a representation $\rho:C^*(G_n)\to \Bounded(H)$ by
\[
\rho(v_n) = \delta_1\delta_2\cdots \delta_{n-1}\qquad \rho(v_i) = \delta_i\cdots \delta_{i-1} (1-\delta_i) \qquad \rho(e_{lj}) =   \epsilon_l\rho(v_j)\]
for $i<n$ and $1\leq l\leq j\leq n$. Let $F\in \Bounded(H)$ be the operator
\[ F\ket{k} = \begin{cases} \phantom{-}\ket{k}&\text{if }k_n\geq 0\\ -\ket{k}&\text{if }k_n < 0.\end{cases}\]
Hawkins and Landi show that $K^1(C(S^{2n-1}_q))\cong \Z$ is generated by the class of the Fredholm module
\begin{equation}\label{HLodd} \mathcal F = (\rho, H, F).\end{equation}
(This fact can also be proved by a computation using  Theorem \ref{K1_theorem}; note that $\mathcal F$ satisfies the condition \eqref{proj_degenerate_equation}.)

For the even $K$-homology, let $\psi:C^*(G)\to \C$ be the character $\psi(v_1)=\psi(e_{11})=1$, $\psi(v_i)=\psi(e_{ij})=0$ for all $i,j\geq 2$. Hawkins and Landi show that $K^0(C(S^{2n-1}_q))\cong \Z$ is generated by the class of
\begin{equation}\label{HLeven} \mathcal E = (\psi\oplus 0, \C\oplus 0, 0).\end{equation}

\subsection{Fredholm modules over quantum lens spaces}

Let us now fix $n\geq 2$, $q\in [0,1)$ and $p\geq 2$, and simply write $S$ for $S^{2n-1}_q$, $L$ for $L^{2n-1}_{q,p}$, $G=(V,E,s,r)$ for $G_n$, and $G^p$ for $G_n^p$. Let $t:\Z V^\dual\to \Z V^\dual$ be the operator 
\[t\eta (v_i) = \begin{cases}\eta(v_{i+1})&\text{if }1\leq i <n\\ 0&\text{if }i=n\end{cases}\] and let $D=1+t+\ldots+ t^{n-1}=(1-t)^{-1}$. Then
\[ D\eta (v) = \sum_{\substack{e\in E\\ s(e)=v}} \eta(r(e))\]
by the definition of the graph $G$, and it follows by induction that
\[ D^p \eta (v) = \sum_{\substack{e\in E^p\\ s(e)=v}} \eta(r(e)).\]
The coboundary in the complex $A^*(G^p)$ is thus given by $\partial_p^\dual = D^p-1$. For each $i=1,\ldots,n$, let $\eta_i\in \Z V^\dual$ be the function which is $1$ on $v_i$ and $0$ on $v_j$ for $j\neq i$. 

The even $K$-homology of the quantum lens spaces is easily computed:

\begin{proposition}\label{lens_K0_proposition}
The class of the Fredholm module $\mathcal E$ of \eqref{HLeven}, restricted from $C(S)$ to $C(L)$, generates $K^0(C(L))\cong \Z$.
\end{proposition}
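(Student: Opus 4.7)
The plan is to compute the group $K^0(C(L))$ via the cochain complex $A^*(G_n^p)$, identify its generator, and then verify by a direct application of Theorem \ref{K0_theorem} that the restricted Hawkins-Landi class maps to that generator.

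First, I observe that in the ordered basis $(\eta_1,\ldots,\eta_n)$ of $\Z V^\dual$ the operator $D$ is unipotent and upper triangular: from $D\eta_k=\eta_1+\eta_2+\cdots+\eta_k$ one reads off that $D-1$ has $1$'s strictly above the diagonal. Expanding
\[ \partial_p^\dual = D^p-1 = \sum_{k=1}^{n-1} \binom{p}{k}(D-1)^k, \]
only the $k=1$ term contributes to the super-diagonal, so the super-diagonal entries of $\partial_p^\dual$ are all equal to $p$, while $\partial_p^\dual$ remains strictly upper triangular. Reading the equations $\partial_p^\dual x=0$ row by row from bottom to top gives successively $p x_n = 0$, $p x_{n-1}=0$, and so on, forcing $x_2=\cdots=x_n=0$ (since $p\geq 2$) while $x_1$ is unconstrained. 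Hence $K^0(C(L))\cong \ker\partial_p^\dual = \Z\eta_1$.

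Next I apply Theorem \ref{K0_theorem} to $\mathcal{E}=(\psi\oplus 0,\C\oplus 0,0)$ regarded as a Fredholm module over $C(L)$. Under the Hong-Szymanski isomorphism the vertex projection $v_i\in C^*(G_n^p)$ coincides with the $\alpha_p$-invariant projection $v_i\in C^*(G_n)\cong C(S)$, so $\psi|_{C(L)}(v_i)=\delta_{i,1}$. Since the odd component of $\mathcal{E}$ is zero, the operator $\rho_1(v_i)F\rho_0(v_i)$ is the zero map $\psi(v_i)\C\to 0$, whose Fredholm index equals $\dim \psi(v_i)\C = \delta_{i,1}$. Thus $\Index_{\mathcal{E}|_{C(L)}} = \eta_1$ in $\Z V^\dual$, which is precisely the generator identified in the first step.

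The main delicacy is simply recognising the super-diagonal structure of $\partial_p^\dual$; once this is in hand the kernel computation and the index computation are both immediate. Apart from the routine compatibility of vertex projections under the Hong-Szymanski isomorphism (implicit in the construction of \cite{HS}, \cite{HSlens}), I do not foresee any serious obstacle.
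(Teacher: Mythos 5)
Your proof is correct and follows essentially the same route as the paper: identify $K^0(C(L))$ with $\ker(D^p-1)=\Z\eta_1$ via Theorem \ref{K0_theorem} and check that $\Index_{\mathcal E}=\eta_1$. The only (cosmetic) difference is in the kernel computation: the paper factors $1-D^{-p}=\left(\sum_{i=0}^{p-1}(1-t)^i\right)t$ and notes the first factor is injective, whereas you exploit the strict upper-triangularity of $D^p-1$ with super-diagonal entries $p$; both are immediate.
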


\begin{proof}
We have
\[ \ker (D^p -1 ) = \ker(1-D^{-p}) = \ker \left(\sum_{i=0}^{p-1} (1-t)^i\right)t = \ker t\]
because the operator $\sum_{i=0}^{p-1} (1-t)^i$ is one-to-one (it has determinant $p^n$). 
The kernel of $t$ is generated by the function $\eta_1=\Index_{\mathcal E}$, and so the proposition follows from Theorem \ref{K0_theorem}.
\end{proof}

Turning to odd degree, let us first observe that the Fredholm module $\mathcal F$ is equivariant:

\begin{lemma}
Let $\mathcal F=(\rho,H,F)$ be as in \eqref{HLodd}, and define a unitary $\alpha_p\in \Bounded(H)$ by 
$\alpha_p\ket{k} = \zeta_p^{k_1+\ldots +k_n}\ket{k}$. Then $[F,\alpha_p]=0$, and $\alpha_p\rho(a)\alpha_p^{-1} = \rho(\alpha_p(a))$ for all $a\in C(S)$.
\qed
\end{lemma}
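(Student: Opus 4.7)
The plan is to verify both assertions by direct computation on the orthonormal basis $\{\ket{k}\}$, after one preliminary identification.

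First I would observe that both $F$ and $\alpha_p$ are diagonal in this basis: the eigenvalue of $F$ on $\ket{k}$ depends only on whether $k_n\geq 0$, while $\alpha_p$ multiplies $\ket{k}$ by the scalar $\zeta_p^{k_1+\ldots+k_n}$. Diagonal operators commute, so $[F,\alpha_p]=0$ is immediate.

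For the equivariance statement, the preparatory step is to translate the automorphism $\alpha_p$ of $C(S)$ across the Hong-Szymanski isomorphism $C(S)\cong C^*(G_n)$. Under this isomorphism each spherical generator $z_i$ corresponds to a sum of edges sharing the source $v_i$, so the automorphism $z_i\mapsto \zeta_p z_i$ is nothing other than the gauge automorphism of $C^*(G_n)$: it fixes every vertex projection $v_j$ and multiplies every edge $e_{lj}$ by $\zeta_p$. By linearity, multiplicativity, and $*$-preservation, it therefore suffices to check the conjugation identity on the Cuntz-Krieger generators $\rho(v_j)$ and $\rho(e_{lj})$. Each $\rho(v_j)=\delta_1\cdots\delta_{j-1}(1-\delta_j)$ is diagonal in $\{\ket{k}\}$, hence commutes with $\alpha_p$, matching $\rho(\alpha_p(v_j))=\rho(v_j)$. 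For the edges, $\rho(e_{lj})=\epsilon_l\rho(v_j)$ increases $k_l$ by $1$ on the subspace where it is nonzero, so conjugation by $\alpha_p$ scales it by $\zeta_p^{(k_l+1)-k_l}=\zeta_p$, in agreement with $\rho(\alpha_p(e_{lj}))=\zeta_p\rho(e_{lj})$.

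The only step with any content is the identification of the intrinsic automorphism $\alpha_p$ of $C(S)$ with the gauge automorphism of $C^*(G_n)$; once that is established from the explicit form of the isomorphism in \cite{HS}, \cite{HSlens}, the two commutation claims follow from the two one-line calculations above.
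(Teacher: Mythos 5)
Your proof is correct, and since the paper states this lemma with no proof at all (the \qed indicates it is left as a direct verification), your basis-by-basis computation is exactly the argument the author intends: diagonality gives $[F,\alpha_p]=0$, and checking the conjugation identity on the Cuntz--Krieger generators $\rho(v_j)$ and $\rho(e_{lj})$ suffices. The only caveat is that the images of the $z_i$ under the Hong--Szyma\'nski isomorphism are not literally sums of edges, but the fact you actually need --- that the isomorphism intertwines $z_i\mapsto\zeta_p z_i$ with the gauge automorphism fixing vertices and scaling edges by $\zeta_p$ --- is established in \cite{HS}, \cite{HSlens} and is precisely what makes the fixed-point algebra a graph algebra, so this is a harmless imprecision rather than a gap.
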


The restriction of $\mathcal F$ to the subalgebra $C(L) =C(S)^{\alpha_p}$ thus decomposes over the spectrum of $\alpha_p$:
\[\mathcal F = \bigoplus_{m=0}^{p-1} \mathcal F_m \qquad \mathcal F_m \coloneq (\rho, H_m, F)\qquad H_m \coloneq \{h\in H\ |\ \alpha_p h = \zeta_p^m h\}.\]

\begin{proposition}\label{lens_K1_proposition}
The classes of the Fredholm modules $\mathcal F_m$ ($0\leq m\leq p-1$) generate $K^1(C(L))$. Each of these generators has infinite order, and each of the elements $\mathcal F_m-\mathcal F_0$ has finite order. 
\end{proposition}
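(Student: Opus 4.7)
The plan is to apply Theorem \ref{K1_theorem} to each Fredholm module $\mathcal F_m$ over $C(L)\cong C^*(G_n^p)$ and identify the resulting class in $\coker(D^p-1)$. One first checks that $\mathcal F_m$ inherits condition \eqref{proj_degenerate_equation} from $\mathcal F$: the projection $P=\tfrac12(F+1)$ commutes with each $\rho(v_i)$ and with $\rho(ee^*)$ for every edge $e$ of $G_n^p$ (all such projections being coordinate slabs in $H$), and all of them preserve the decomposition $H=\bigoplus_m H_m$.

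The main computation is an explicit formula for the path operators. For a nondecreasing length-$p$ path $\mathbf i = (i_1,\ldots,i_{p+1})$ in $G_n$, an induction on $p$ gives
\[\rho(e_{i_1i_2}e_{i_2i_3}\cdots e_{i_pi_{p+1}})\ket k = \ket{k+\mu(\mathbf i)}\text{ for }k\in\rho(v_{i_{p+1}})H,\]
where $\mu(\mathbf i) = \sum_{j=1}^p e_{i_j}\in \N^n$ (and zero elsewhere); the induction goes through because the nondecreasing property forces each intermediate state into the source projection of the next edge. If $i_{p+1}<n$, then $\mu(\mathbf i)_n=0$ and $P\rho(\mathbf i)P$ is a bijection on $H_m$, contributing $0$ to the index; if $i_{p+1}=n$, the operator acts as the shift $k_n\mapsto k_n+\mu(\mathbf i)_n$ between arithmetic progressions in $\Z$, with Fredholm index $-\lfloor (m+\mu(\mathbf i)_n)/p\rfloor$. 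Counting paths from $v_i$ to $v_n$ with prescribed $\mu_n=k$ (there are $\binom{n-i+p-k-2}{p-k-1}$ such) and applying the hockey-stick identity yields
\[\Index_{\mathcal F_m}(v_n) = -1\quad\text{for all }m,\qquad \Index_{\mathcal F_m}(v_i) = -\binom{n-i+m-1}{m-1}\quad(i<n).\]

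Next one analyses $\coker(D^p-1)$. Because $D^p$ is upper unitriangular and so invertible over $\Z$, one has $\image(D^p-1)=\image M$ for $M=1-(1-t)^p$. In the basis $\eta_1,\ldots,\eta_n$ the matrix of $M$ is strictly upper triangular, with $M\eta_j = p\eta_{j-1}-\binom{p}{2}\eta_{j-2}+\cdots$; in particular every $M\eta_j$ has zero $\eta_n$-coefficient, and the block $\Z\{\eta_2,\ldots,\eta_n\}\to\Z\{\eta_1,\ldots,\eta_{n-1}\}$ is upper triangular with $p$'s on the diagonal. Thus $[\eta_n]$ generates the free summand $\Z\subseteq \coker(D^p-1)$, while the torsion summand has order $p^{n-1}$ and is supported on $[\eta_1],\ldots,[\eta_{n-1}]$. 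From the index formula, each $\Index_{\mathcal F_m}$ has $\eta_n$-component $-1$, so $[\mathcal F_m]$ has infinite order; and $\Index_{\mathcal F_m}-\Index_{\mathcal F_0}$ has vanishing $\eta_n$-component, so $[\mathcal F_m]-[\mathcal F_0]$ is torsion.

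The hardest remaining step is the generation statement. Since $[\mathcal F_0]$ already generates the free summand, it suffices to show that the reduced vectors $v_m = \left(-\binom{n-i+m-1}{m-1}\right)_{i=1}^{n-1}\in\Z^{n-1}$ for $m=1,\ldots,p-1$ generate the torsion cokernel $\Z^{n-1}/(\image M)$ of order $p^{n-1}$. My plan for this is induction on $n$: the last component $v_m[n-1]=-m$ combined with the relation $M\eta_n\equiv p\eta_{n-1}$ modulo $\eta_1,\ldots,\eta_{n-2}$ shows that $v_1$ alone generates $\Z/p$ in the $\eta_{n-1}$-direction, while Pascal's identity
\[v_m[i]-v_{m-1}[i]=-\binom{n-i+m-2}{m-1}\]
exhibits the successive differences of the $v_m$'s as (extensions of) the analogous vectors for $G_{n-1}^p$, allowing the inductive hypothesis to generate the residual torsion in $\Z\{\eta_1,\ldots,\eta_{n-2}\}$.
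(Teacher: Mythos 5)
Your index computation is correct and, once the binomial coefficient is unwound, coincides with the paper's: $\binom{n-i+m-1}{m-1}$ is precisely the number of length-$m$ paths in $G_n$ from $v_i$ to $v_n$, i.e.\ $D^m\eta_n(v_i)$, so your formula is the paper's identity $\Index_{\mathcal F_m}=-D^m\eta_n$, obtained by essentially the same analysis of which length-$p$ paths have nonzero index. The arguments for infinite order and for the differences $\mathcal F_m-\mathcal F_0$ being torsion also match the paper's (the free part is detected by the $\eta_n$-coefficient; the torsion part has order $p^{n-1}$ and is supported on $\eta_1,\dots,\eta_{n-1}$).

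The genuine gap is in the generation statement, which you yourself present only as a ``plan''. The inductive step does not close as described. Writing $T_n$ for the torsion quotient, one has an exact sequence $0\to T_{n-1}\to T_n\to\Z/p\to 0$ given by reducing the $\eta_{n-1}$-coefficient mod $p$; but the successive differences $w_m=v_m-v_{m-1}$ all have $\eta_{n-1}$-coefficient $-1$, not $0$, so they are \emph{not} elements of $T_{n-1}$ and the inductive hypothesis for $G_{n-1}^p$ cannot be applied to them directly. To land in $T_{n-1}$ one must pass to $w_m-w_1=v'_m-v'_1$ together with $pw_1$, and the latter must be rewritten using the relation $M\eta_n\equiv 0$, which introduces a correction term $-\binom{p}{2}\eta_{n-2}+\cdots$; it is then not at all clear that these elements generate $\langle v'_1,\dots,v'_{p-1}\rangle$ --- there is a residual potential $\Z/p$, namely the class of $v'_1$ modulo the subgroup you actually obtain. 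The paper sidesteps all of this with one observation: since $D=(1-t)^{-1}$ is unipotent, the functions $D^i\eta_n$ for $i=0,\dots,n-1$ form a $\Z$-basis of $\Z V^\dual$, hence their classes generate $\coker(D^p-1)$; and since $D^p\equiv 1$ on the cokernel, each $D^i\eta_n$ is congruent to $D^{i\bmod p}\eta_n$, so the classes of $D^m\eta_n=-\Index_{\mathcal F_m}$ for $0\le m\le p-1$ already generate. You should replace your induction with this argument, or else supply the missing verification in the inductive step.
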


\begin{proof}
 We will of course use the isomorphism $\Index:K^1(C^*(G^p))\to \coker (D^p-1)$ of Theorem \ref{K1_theorem}. The functions
$D^i\eta_n$, for $i=0,\ldots,n-1$, constitute a basis for $\Z V^\dual$, and so the classes of $D^i\eta_n$ for $i=0,\ldots,p-1$ generate $\coker (D^p-1)$.
The class of $D^i\eta_n$ has infinite order in $\coker (D^p-1)$, because $(D^p-1)\eta(v_n)=0$ for every $\eta$, while $D^i\eta_n(v_n)=1$. The restriction of $D^p-1$ to an operator $\Q[V\setminus\{v_1\}]^\dual \to \Q[V\setminus\{v_n\}]^\dual$ is invertible (it has determinant $p^{n-1}$). Each function $D^i\eta-\eta$ is supported on $\{v_1,\ldots,v_{n-1}\}$, and so the classes $D^i\eta-\eta$ are torsion in $\coker (D^p-1)$. These considerations show that it will suffice to prove that
\begin{equation}\label{IndexF_equation}
 \Index_{\mathcal F_m} = -D^{m}\eta_n\quad \text{for all}\quad m=0,\ldots,p-1.
\end{equation}

Fix $i=1,\ldots,n$ and consider an edge in $G^p$ with source $v_i$: that is, a length-$p$ path $\mu$ in $G$ with source $v_i$. Since the projection $P=\frac{1}{2}(F+1)$ commutes with $\rho(e_{kj})$ unless $k=j=n$, we will have $\Index_{\mathcal F_m}(\mu)=0$ unless $\mu$ is of the form $\lambda e_{nn}^d$ for some $d=1,\ldots,p$, and some path $\lambda$ having source $v_i$, range $v_n$, length $l=p-d$, and not containing the edge $e_{nn}$. (When $i=n$, we allow the ``length-zero path'' $\lambda=v_n$.) Since $P\rho(\lambda)P$ is an isomorphism, we have
\begin{align*} \Index_{\mathcal F_m}(\mu) & = \Index\left(\rho(v_n)PH_m\xrightarrow{P\rho(\mu)P} \rho(\mu\mu^*)PH_m\right)\\
 &= \Index\left( \rho(v_n)PH_m \xrightarrow{P\rho(e_{nn}^d) P} \rho(v_n)PH_{m+d}\right)
\end{align*}
where $m+d$ is taken modulo $p$. The space $\rho(v_n)P H_m$ is the span of the basis vectors $\ket{k}$ having $k_1=\ldots =k_{n-1}=0$, $k_n \geq 0$ and $k_n = m\,\mod p$, and the operator $P\rho(e_{nn}^d)P$ acts by increasing $k_n$ by $d$. This operator is injective; it is surjective if $m+d< p$, otherwise it has one-dimensional cokernel. This shows that $\Index_{\mathcal F_m}$ is equal to $-1$ times the number of paths $\lambda$ as above with length $l\leq m$. Each such $\lambda$ extends uniquely to a length-$m$ path $\lambda e_{nn}^{m-l}$, and every length-$m$ path from $v_i$ to $v_n$ arises in this way. Therefore
\[ \Index_{\mathcal F_m}(v_i) = -\ \#\left\{\text{length-$m$ paths in $G$ from $v_i$ to $v_n$}\right\} = - D^{m}\eta_n(v_i),\]
proving \eqref{IndexF_equation}.
\end{proof}

\begin{example}
For $n=2$ (i.e., for the $3$-dimensional quantum lens space $L^3_{q,p}$) one has $D^p=(1+t)^p=1+pt$. The cokernel of $\partial^\dual=pt$ is generated by $\eta_2$ (which has infinite order) and $\eta_1$ (with has order $p$). The formula \eqref{IndexF_equation} gives $\Index_{\mathcal F_m} = -m\eta_1-\eta_2$.
Thus $K^1(C(L^3_{q,p}))\cong \Z\oplus \Z/p$, generated by $\mathcal F_0$ (infinite order) and $\mathcal F_1-\mathcal F_0$ (order $p$).
\end{example}

\begin{example}
For $p=2$ (i.e., for the quantum real projective space $\R \mathrm{P}^{2n-1}_q$) one can likewise completely determine the $K$-homology. The coboundary is $\partial^\dual = D^2-1 = \sum_{i=1}^{n-1} (i+1)t^i$. Elementary computations reveal that the cokernel of $\partial^\dual$ is generated by $\eta_n$ (which has infinite order) and $(D-1)\eta_n$ (which has order $2^{n-1}$). Thus $K^1(C(\R\mathrm{P}^{2n-1}_q))\cong \Z\oplus \Z/2^{n-1}$, generated by $\mathcal F_0$ (infinite order) and $\mathcal F_1-\mathcal F_0$ (order $2^{n-1}$).
\end{example}

\begin{remark}
 The complex $\Z^n \xrightarrow{1-(1-t)^p} \Z^n$ also arises in \cite{ABL} from geometric considerations (a Gysin sequence coming from viewing quantum lens spaces as total spaces of principal $U(1)$-bundles over quantum complex projective spaces). This shows, incidentally, that the $K$-invariants of the quantum lens spaces are isomorphic to those of the corresponding classical spaces.
\end{remark}

\begin{remark}
 To conclude, let us sketch another computation of the $K$-theory and $K$-homology of quantum lens spaces, also using graph-algebra techniques. The idea is to view the quantum sphere, of which the lens space is a quotient, as the boundary of a quantum ball. The same line of argument applies in the commutative case: see  \cite[Corollary 2.7.6]{Atiyah}. 
 
 Fix $n$, $q$ and $p$ as above. Let ${G^+}$ be the graph obtained from $G=G_n$ by adding one vertex $v_{n+1}$, and for each $i=1,\ldots, n$ an edge $e_{i,n+1}$ with source $v_i$ and range $v_{n+1}$. The vertex $v_{n+1}\in C^*({G^+})$ generates an ideal $\Compact$ isomorphic to the compact operators, and $C^*({G^+})/\Compact \cong C^*(G)$. The algebra $C^*({G^+})$ can be interpreted in a natural way as the algebra of functions on a quantum ball $B^{2n}_q$, with the quotient map $C^*({G^+})\to C^*(G)$ corresponding to restriction of functions from $B^{2n}_q$ to its boundary (quantum) sphere $S^{2n-1}_q$: see \cite{HSballs}. 
 
 The automorphism $\alpha_p$ of $C^*(G)$ lifts to an automorphism of $C^*({G^+})$, which fixes the new vertex $v_{n+1}$. 
 The inclusion $\C v_{n+1} \into \Compact$ induces an isomorphism in equivariant $K$-theory
 \[K_*^{\Z/p}(\Compact)\cong K_*^{\Z/p}(\C)\cong \begin{cases} \Rep(\Z/p)\cong \Z[x]/(x^p-1)& *=0 \\ 0&*=1.\end{cases}\] ($\Rep$ denotes the representation ring.)
 The crossed product $C^*({G^+})\rtimes \Z/p$ can be identified with the $C^*$-algebra of a graph ${G^+}\rtimes \Z/p$ \cite{KP}, and a computation using Theorem \ref{Ktheory_theorem} shows that $K^{\Z/p}_*(C^*({G^+}))\cong K^{\Z/p}_*(\C)$, and that the endomorphism of $\Rep(\Z/p)$ induced by the inclusion $\Compact\into C^*({G^+})$ is multiplication by $\chi_n=(1-x)^n$ (the Euler characteristic of the representation of $\Z/p$ on $\C^n$ via multiplication by $\zeta_p$).
 The crossed product $C^*(G)\rtimes \Z/p$ is Morita equivalent to the fixed-point algebra $C^*(G)^{\alpha_p}\cong C(L)$, so the long exact sequence in equivariant $K$-theory induced by the restriction map $C^*(G^+)\to C^*(G)$ produces an identification of $K_*(C(L))$ with the homology of 
 \[ \Rep(\Z/p)\xrightarrow{ \chi_n } \Rep(\Z/p).\]
 A similar argument shows that the $K$-homology $K^*(C(L))$ is isomorphic to the cohomology of the dual complex.
\end{remark}

 \bibliography{k-hom}{}
 \bibliographystyle{alpha}

\end{document}